\newtheorem{theorem}{Theorem}                  
\newtheorem{prop}{Proposition}
\newtheorem{lemma}{Lemma}
\begin{document}
\title{\LARGE   \textbf{The edit distance function of some graphs } }

\author{
\Large \textbf{Yumei Hu$^{a}$, Yongtang Shi$^{b}$, Yarong Wei$^{a,*}$ }  \\
 \emph{$^{a}$ School of Mathematics}\\
 \emph{Tianjin University, Tianjin 300072, China}\\
 \emph{\texttt{huyumei@tju.edu.cn; yarongwei@163.com}}\\
 \emph{$^{b}$ Center of Combinatorics and LPMC}\\
 \emph{Nankai University, Tianjin 300071, China}\\
 \emph{\texttt{shi@nankai.edu.cn}}\\
 }

\date{}
\maketitle

\begin{abstract}
The edit distance function of a hereditary property $\mathscr{H}$ is the asymptotically largest edit distance  between a graph of density $p\in[0,1]$ and $\mathscr{H}$. Denote by $P_n$ and $C_n$ the path graph of order $n$ and the cycle graph of order $n$, respectively. Let $C_{2n}^*$ be the cycle graph $C_{2n}$ with a diagonal, and $\widetilde{C_n}$ be the graph with vertex set $\{v_0, v_1, \ldots, v_{n-1}\}$ and $E(\widetilde{C_n})=E(C_n)\cup \{v_0v_2\}$.
Marchant and Thomason determined the edit distance function of $C_6^{*}$.
Peck studied the edit distance function of $C_n$, while Berikkyzy et al. studied the edit distance of powers of cycles. In this paper, by using the methods of Peck and Martin, we determine the edit distance function of $C_8^{*}$, $\widetilde{C_n}$ and $P_n$, respectively.

{\flushleft\bf Keywords}: edit distance, colored regularity graphs, hereditary property, clique spectrum
\end{abstract}



\section{Introduction}
The edit distance in graphs was introduced by Axenovich, K\'ezdy and Martin \cite{r1} and by Alon and Stav \cite{r2} independently.
The edit distance problem considered here is ``How many edges need to be added or deleted (edited) in a graph $G$ so that it will have a certain property?'' The presence or absence of edges in a certain graph corresponds to pairs of genes which activate or deactivate one another in evolutionary biology.
In evolutionary theory, the gene reconstruction avoiding forbidden induced subgraphs is studied \cite{r3}, which is equivalent to the edit distance problem. The edit distance problem is also important to the algorithmic aspects of property testing \cite{r4,r6,r7,r2}.

The \emph{edit distance} between a graph $G$ and a property $\mathscr{H}$ is
$$dist(G, \mathscr{H})=\min\left\{|E(G)\bigtriangleup E(G')|/{n \choose 2}:V(G)=V(G'), G'\in\mathscr{H}\right\}.$$
The \emph{edit distance function} of a property $\mathscr{H}$, denoted $ed_{\mathscr{H}}(p)$, measures the maximum distance of a graph with density $p$ from $\mathscr{H}$. Formally,
\begin{equation}\label{e3}
ed_{\mathscr{H}}(p)=\lim_{n\rightarrow\infty}\max\left\{dist(G, \mathscr{H}): |V(G)|=n, |E(G)|=\left\lfloor p{n \choose 2}\right\rfloor\right\}.
\end{equation}
if this limit exists.

A \emph{hereditary property} is a family of graphs that is closed under the taking of induced subgraphs. For a given graph $H$, the property of having no $H$ as an induced subgraph  is called a \emph{principal hereditary property}, denoted by $Forb(H)$. Clearly, $Forb(H)$ is a hereditary property for any graph $H$. In fact, for every hereditary property $\mathscr{H}$ there exists
a family of graphs $\mathscr{F}(\mathscr{H})$ such that $\mathscr{H}=\bigcap_{H\in\mathscr{F}(\mathscr{H})}Forb(H)$.
A hereditary property is said to be \emph{nontrivial} if there is an infinite sequence of graphs that is in the property.
The properties for which we study the edit distance are usually hereditary property.

Balogh and Martin \cite{r51} showed that the limit in (\ref{e3}) exists and the edit distance function has a number of interesting properties:

\begin{prop}[\cite{r24}]\label{pro8}
Let $\mathscr{H}$ be a nontrivial hereditary property. For $p\in[0, 1]$,

 $(a)$ $ed_\mathscr{H}(p)$ is continuous.

 $(b)$ $ed_\mathscr{H}(p)$ is concave down.
\end{prop}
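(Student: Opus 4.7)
The plan is to deduce concavity (b) from the colored-regularity-graph (CRG) representation of the edit distance function, and to obtain continuity (a) either as a corollary of concavity (on the open interval) or, more strongly, via a direct $1$-Lipschitz estimate valid on all of $[0,1]$.

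For (b), I would invoke the representation due to Balogh--Martin (and refined by Marchant--Thomason):
\[
ed_{\mathscr{H}}(p)\;=\;\inf_{K\in\mathcal{K}(\mathscr{H})} g_K(p),\qquad g_K(p)\;=\;\min_{\mathbf{x}\in\Delta_k}\Bigl[(1-p)\!\!\sum_{ij\in W(K)}\!\!x_i x_j \;+\; p\!\!\sum_{ij\in B(K)}\!\!x_i x_j\Bigr],
\]
where $\mathcal{K}(\mathscr{H})$ is the family of CRGs whose blow-ups embed into $\mathscr{H}$, $W(K)$ and $B(K)$ are the sets of white and black edges (with loops allowed) of the CRG $K$ on vertex set $[k]$, and $\Delta_k$ is the standard simplex. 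The key observation is that for each fixed pair $(K,\mathbf{x})$ the bracketed quantity $h_{K,\mathbf{x}}(p)$ is affine in $p$. Consequently
\[
ed_{\mathscr{H}}(p)\;=\;\inf_{K,\,\mathbf{x}}\,h_{K,\mathbf{x}}(p)
\]
is a pointwise infimum of affine functions of $p$, and any such infimum is concave on $[0,1]$; this proves (b). (Note that although each $g_K$ is itself concave, the overall statement does not follow from ``min of concave is concave''; rather, it uses the stronger fact that $g_K$ is itself the $\min$ of \emph{affine} functions, so the whole family unpacks into a single infimum of affine functions.)

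For (a), I would argue directly that $ed_{\mathscr{H}}$ is $1$-Lipschitz on $[0,1]$. Fix $p<q$ in $[0,1]$ and $\varepsilon>0$; choose $n$ large and a graph $G$ on $n$ vertices with $\lfloor p\binom{n}{2}\rfloor$ edges satisfying $dist(G,\mathscr{H})\ge ed_{\mathscr{H}}(p)-\varepsilon$. Flip $\lceil(q-p)\binom{n}{2}\rceil$ arbitrary non-edges of $G$ to obtain a graph $G'$ of density $q$; the triangle inequality for $dist$ gives $dist(G',\mathscr{H})\ge dist(G,\mathscr{H})-(q-p)-o(1)$, hence $ed_{\mathscr{H}}(q)\ge ed_{\mathscr{H}}(p)-(q-p)$. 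The reverse inequality is symmetric (start from an extremal graph of density $q$ and delete edges instead).

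The main obstacle is the CRG representation itself: it rests on Szemer\'edi's regularity lemma together with a careful extremal analysis that identifies exactly which CRGs belong to $\mathcal{K}(\mathscr{H})$, and a matching construction using blow-ups to show the infimum is attained in the limit. Once that representation is in hand, (b) is immediate from the affine form of $h_{K,\mathbf{x}}$, and (a) is a short edge-flip estimate that does not even require the CRG machinery.
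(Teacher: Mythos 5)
The paper itself gives no proof of this proposition: it is quoted verbatim from Martin's survey \cite{r24}, so there is nothing internal to compare against; what you propose is essentially the standard argument from the cited literature, and it is correct. Your concavity proof is exactly the intended one: by Proposition \ref{pro7}, $ed_{\mathscr{H}}(p)=\min_{K\in\mathscr{K}(\mathscr{H})}g_K(p)$, and since for each fixed weight vector $\mathbf{x}$ the quantity $\mathbf{x}^{T}M_K(p)\mathbf{x}$ is affine in $p$, the edit distance function is a pointwise infimum of affine functions of $p$, hence concave; your $1$-Lipschitz edge-flip estimate for continuity (valid up to the endpoints, where concavity alone would not suffice) is also sound, granting the existence of the limit in (\ref{e3}), which the paper already assumes via Balogh--Martin. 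Two minor corrections: first, your formula for $g_K$ swaps the paper's convention (white pairs carry weight $p$ and black pairs $1-p$, not the reverse), though this does not affect the structure of the argument; second, your parenthetical claim that the result ``does not follow from `min of concave is concave'\,'' is itself mistaken, since the pointwise infimum of an arbitrary family of concave functions \emph{is} concave (the failure you have in mind occurs for infima of convex functions), so that caveat is unnecessary even though your affine-infimum route is the cleaner way to present it. The only genuine dependence is on the CRG representation theorem (Marchant--Thomason, Proposition \ref{pro7}), which you correctly identify as the heavy machinery and which is proved independently of continuity and concavity, so there is no circularity.
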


In \cite{r2}, Alon and Stav prove that for every hereditary property $\mathscr{H}$, there exists a  $p^{*}=p^{*}(\mathscr{H})\in[0, 1]$ such that the maximum distance of a graph $G$ on $n$ vertices from $\mathscr{H}$ is asymptotically the same as that of the Erd\"os-R\'enyi random graph $G(n,p^{*})$. Namely,
\begin{equation}\label{e5}
\max\left\{dist(G, \mathscr{H}): |V(G)|=n\right\}=\mathbb{E}[dist(G(n, p^{*}), \mathscr{H})]+o(1).
\end{equation}
We denote the limit in (\ref{e5}) by $d^{*}_{\mathscr{H}}$.

The edit distance functions of some kinds of graphs have been investigated in recent years, including complete graphs \cite{r13} and split graphs \cite{r9}. Actually, complete bipartite graphs are also studied. Marchant and Thomason \cite{r32} studied the edit distance functions of $K_{2,2}$ and $K_{3,3}$, respectively. Balogh and Martin \cite{r51} established the value of $p^{*}_{Forb(K_{3,3})}$ and $d^{*}_{Forb(K_{3,3})}$.
Martin and McKay studied the edit distance function of $K_{2,t}$ in \cite{r14}. Recently, Berikkyzy et al. \cite{r5} settled the edit distance function for many powers of cycles.

Denote by $P_n$ and $C_n$ the path graph of order $n$ and the cycle graph of order $n$, respectively. Let $C_{2n}^*$ be the cycle graph $C_{2n}$ with a diagonal, and $\widetilde{C_n}$ be the graph with vertex set $\{v_0, v_1, \ldots, v_{n-1}\}$ and $E(\widetilde{C_n})=E(C_n)\cup \{v_0v_2\}$.

In \cite{r32}, Marchant and Thomason studied the edit distance function of the graph $C_6^{*}$. Motivated by this result, we study the edit distance function of the graph $C_8^{*}$ and prove the following result.

\begin{theorem}\label{theorem1}
Let $\mathscr{H}=Forb(C_8^*)$.

$$ed_{\mathscr{H}}(p)= \min\left\{\frac{p}{2}, \frac{p(1-p)}{1+p}, \frac{1-p}{3}\right\},\ \text{for}\ p\in[0, 1].$$
\end{theorem}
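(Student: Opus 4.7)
My approach follows the colored-regularity-graph (CRG) framework of Marchant and Thomason, as applied to cycles by Peck. In this framework one writes
\[
ed_{\mathscr{H}}(p) = \inf\{f_K(p):K\text{ is an }\mathscr{H}\text{-good CRG}\},
\]
where $f_K(p)$ is the minimum of an explicit quadratic form on the probability simplex indexed by $V(K)$ (with coefficients determined by the vertex and edge colors), and $\mathscr{H}$-goodness means that $C_8^*$ cannot be embedded in $K$. The proof splits into an upper bound (exhibit good CRGs) and a matching lower bound.

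For the upper bound I would exhibit three good CRGs, each yielding one of the three terms. (i) $K_1$ has three white vertices with all three edges gray; an embedding of $C_8^*$ would partition its vertex set into three cliques, each of size at most two by triangle-freeness, covering at most $6<8$ vertices. A direct calculation gives $f_{K_1}(p)=(1-p)/3$. (ii) $K_2$ has two black vertices joined by a gray edge; an embedding would give a proper $2$-coloring of $C_8^*$, which is impossible since $v_1v_2v_3v_4v_5v_1$ is a $5$-cycle in $C_8^*$. Optimization gives $f_{K_2}(p)=p/2$. (iii) $K_3$ has one black vertex and two white vertices with all edges gray; an embedding would partition $V(C_8^*)$ into an independent set of size at most $\alpha(C_8^*)=4$ and two cliques of size at most two, summing to at most $8$. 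Equality forces the independent set to be the unique $4$-element independent set $\{v_2,v_4,v_6,v_8\}$, and its complement $\{v_1,v_3,v_5,v_7\}$ induces only the single edge $v_1v_5$, which cannot be partitioned into two disjoint edges. A Lagrange-multiplier calculation gives $f_{K_3}(p)=p(1-p)/(1+p)$. Taking the pointwise minimum yields the claimed upper bound.

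For the lower bound, I would use the standard reduction (Marchant-Thomason, Martin) that one may restrict attention to CRGs with no gray edges, combined with a size bound from the structure of $C_8^*$: since $C_8^*$ is triangle-free any clique in a blow-up has order at most two, and since $\alpha(C_8^*)=4$ with a unique extremal independent set, the number of black and white vertices of an extremal CRG is bounded by a small constant. The argument then proceeds by case analysis on the shape of $K$; at each step one either exhibits an embedding of $C_8^*$ (discarding $K$) or computes $f_K(p)$ and verifies $f_K(p)\geq\min\{p/2,\,p(1-p)/(1+p),\,(1-p)/3\}$. The main obstacle will be this case analysis for mixed CRGs of small but non-trivial size, where both the goodness verification and the quadratic optimization become intricate; I expect to handle these by exploiting symmetries of $K$ (identifying equivalent vertices), tracking which of the three pieces of the envelope is active via the critical thresholds $p=1/3,\,1/2,\,2/5$, and invoking the concavity ensured by Proposition~\ref{pro8}.
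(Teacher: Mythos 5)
Your upper bound is essentially the paper's computation of $\gamma_{\mathscr{H}}(p)$ from the three extreme points of the clique spectrum, but note that your colors are swapped relative to the standard convention used here (black vertices host cliques, white vertices host independent sets, and the diagonal of $M_K(p)$ is $p$ on white and $1-p$ on black). As written, your $K_1$ (three white vertices, gray edges) does \emph{not} forbid $C_8^*$: an embedding into it is just a proper $3$-coloring, which $C_8^*$ admits; also its $g$-value would be $p/3$, not $(1-p)/3$. The CRGs you actually want are $K(2,0)$, $K(1,2)$ and $K(0,3)$ (two white; one white plus two black; three black), and your combinatorial arguments (bipartiteness fails via the $5$-cycle; triangle-freeness bounds clique parts by $2$; the unique maximum independent set $\{v_2,v_4,v_6,v_8\}$ whose complement spans only the edge $v_1v_5$) are exactly right once the colors are exchanged. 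So the upper bound is fine in substance.

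The lower bound, however, has a genuine gap. There is no reduction to ``CRGs with no gray edges''; the Marchant--Thomason reduction is to $p$-core CRGs, and for $p<1/2$ these have no \emph{black} edges and white edges only incident to black vertices (Proposition~\ref{pro1}), so gray edges are precisely the case that carries all the difficulty. Likewise, membership of a CRG in $\mathscr{K}(\mathscr{H})$ does not bound its number of vertices by a small constant (e.g.\ arbitrarily many black vertices with all white edges forbid $C_8^*$), so a finite case analysis over small CRGs cannot be set up as you describe. The actual argument needed is: (i) show that any $p$-core CRG with $g_K(p)<\gamma_{\mathscr{H}}(p)$ must have $p<1/2$ and all black vertices, using $C_8^*\mapsto K(3,0)$, $C_8^*\mapsto K(2,1)$, the bound of $\lceil 7/3\rceil-1=2$ black vertices when a white vertex is present, and a weighted-degree argument (Proposition~\ref{pro2}) to rule out a white vertex together with a gray edge between black vertices; and (ii) for an all-black CRG avoiding $C_8^*$, show the gray graph contains no $C_4^+$ (four vertices spanning five gray edges), and then invoke Martin's bounds (Proposition~\ref{pro3}): without a gray triangle $g_K(p)>p/2$, and with a gray triangle but no gray $C_4^+$, $g_K(p)\geq\min\{2p/3,(1-p)/3\}$, both contradicting $g_K(p)<\gamma_{\mathscr{H}}(p)$ on $[0,1/2)$. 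None of these ingredients appears in your sketch, and the step you do rely on (no gray edges plus bounded size) is false, so the lower bound as proposed would not go through.
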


Peck \cite{r8} in her Master's thesis calculated the edit distance function of $C_n$. The result is as follows,

\begin{theorem}[\cite{r8}]\label{theorem6}
Let $\mathscr{H}=Forb(C_n)$.

$(a)$ If $n$ is odd, then $ed_{\mathscr{H}}(p)= \min\left\{\frac{p}{2}, \frac{p(1-p)}{1+(\lceil\frac{n}{3}\rceil-2)p}, \frac{1-p}{\lceil\frac{n}{2}\rceil-1}\right\}$, \text{for} $p\in[0,1]$.

$(a)$ If $n$ is even, then $ed_{\mathscr{H}}(p)= \min\left\{ \frac{p(1-p)}{1+(\lceil\frac{n}{3}\rceil-2)p}, \frac{1-p}{\lceil\frac{n}{2}\rceil-1}\right\}$, for $p\in\left[\ \left\lceil n/3\right\rceil^{-1},1\right]$.
\end{theorem}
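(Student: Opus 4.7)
\textbf{Proof plan for Theorem \ref{theorem6}.}

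The plan is to use the colored regularity graph (CRG) framework developed by Marchant--Thomason and refined by Martin. Recall that for a principal hereditary property $\mathscr{H}=Forb(H)$ one has $ed_\mathscr{H}(p)=\inf_K g(K,p)$, the infimum taken over all $H$-good CRGs $K$, where $g(K,p)=\min_{x\in\Delta}x^{T}M_K(p)x$ and $M_K(p)$ has diagonal entry $p$ or $1-p$ according as the corresponding vertex is white or black and off-diagonal entry $p$, $1-p$, or $0$ according as the corresponding edge is white, black, or gray. The approach is to establish matching upper and lower bounds by producing three explicit $C_n$-good CRGs that realize the three terms of the minimum, and then showing that no $C_n$-good CRG can do better.

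For the upper bound I would exhibit three CRGs. First, $K_1$ with two white vertices joined by a gray edge has $g(K_1,p)=p/2$; since $C_n$ is not bipartite when $n$ is odd, no proper embedding of $C_n$ into $K_1$ is possible, so $K_1$ is $C_n$-good precisely for odd $n$, accounting for why the $p/2$ term is absent in part~(b). Second, $K_2$ consisting of $\lceil n/2\rceil-1$ black vertices joined pairwise by gray edges has $g(K_2,p)=(1-p)/(\lceil n/2\rceil-1)$; inside a black bag of any embedding one can place only a clique of $C_n$, hence at most two consecutive vertices, so covering $C_n$ requires at least $\lceil n/2\rceil$ black bags and $K_2$ is $C_n$-good. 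Third, $K_3$ consisting of one white vertex and $\lceil n/3\rceil-1$ black vertices, all edges gray, has value $p(1-p)/(1+(\lceil n/3\rceil-2)p)$ (by the standard diagonal-simplex minimization $1/\sum_i d_i^{-1}$); an embedding of $C_n$ places an independent set in the white bag and covers the remaining vertices by singletons or edges of $C_n$ in the black bags, and a short case analysis shows the optimum strategy requires $\lceil n/3\rceil$ black bags (pick the independent set of size $\lfloor n/3\rfloor$ that splits $C_n$ into disjoint edges), confirming $K_3$ is $C_n$-good.

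The main obstacle is the lower bound: showing that every $C_n$-good CRG $K$ satisfies $g(K,p)\geq\min\{\,p/2,\,p(1-p)/(1+(\lceil n/3\rceil-2)p),\,(1-p)/(\lceil n/2\rceil-1)\,\}$. Following Peck and Martin, I would first reduce to CRGs with only gray non-loop edges by showing that recoloring a white or black edge to gray cannot increase $g(K,p)$ while preserving $C_n$-goodness. On such reduced CRGs, $C_n$-goodness translates into a clean combinatorial statement: $K$ with $w$ white and $b$ black vertices (all gray edges between them) is $C_n$-good if and only if $C_n$ does not admit a partition into at most $w$ independent sets and at most $b$ edges/singletons. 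A careful enumeration of when this fails, separated into the regimes dictated by which of the three terms is smallest, then yields the desired inequality on $g(K,p)$. The bookkeeping here will be the most technical step, and parity of $n$ will play a genuine role, because for even $n$ the CRG $K_1$ is no longer good, which is also the reason the stated formula for part~(b) is only asserted on $p\in[\lceil n/3\rceil^{-1},1]$, where the $p/2$ term does not participate in the minimum in part~(a) either.
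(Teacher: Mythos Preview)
The paper does not itself prove Theorem~\ref{theorem6}; it is quoted from Peck's thesis. However, the paper's proofs of Theorems~\ref{theorem5} and~\ref{theorem2} explicitly follow Peck's method, so one can read off how the argument for $C_n$ actually goes, and it diverges from your plan at the crucial lower-bound step.

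Your reduction ``recolor every non-gray edge to gray'' has a genuine gap. Recoloring a white or black edge to gray does lower $g(K,p)$ (the matrix entry drops from $p$ or $1-p$ to $0$), but it need \emph{not} preserve $C_n$-goodness: a gray edge is strictly more permissive for embeddings than a white or a black edge, so after recoloring, $C_n$ may embed into the new CRG even though it did not before. Hence you cannot assume the extremal CRG has all gray edges. Worse, the all-gray CRGs are exactly the $K(r,s)$, and the infimum of $g_{K(r,s)}(p)$ over good $(r,s)$ is precisely $\gamma_\mathscr{H}(p)$; so your ``reduction'' amounts to asserting $ed_\mathscr{H}(p)=\gamma_\mathscr{H}(p)$, which is the theorem itself.

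The route actually taken (as in Sections~\ref{Sec5} and~\ref{Sec4}) is: restrict to $p$-core CRGs and invoke the Marchant--Thomason structure (Proposition~\ref{pro1}) to get, for $p<1/2$, no black edges and white edges only among black vertices; handle CRGs containing white vertices by the clique-spectrum count (this is where your $K_1,K_2,K_3$ appear), leaving only all-black-vertex CRGs with a mixture of white and gray edges; then use the optimal-weight identities (Proposition~\ref{pro2}) to force each vertex to have gray degree at least $\lceil n/3\rceil$ and every pair to have a common gray neighbor (the analogue of Proposition~\ref{pro4}); finally, analyze the gray subgraph $F$ as an ordinary graph via the longest-path/Hamiltonicity lemmas (Propositions~\ref{pro5} and~\ref{pro6}), concluding that either $F$ contains a long gray cycle into which $C_n$ embeds (contradiction), or $|V(K)|\le\lceil n/2\rceil-1$, which gives $g_K(p)\ge(1-p)/(\lceil n/2\rceil-1)$. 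The weighted longest-path argument, not an edge-recoloring reduction, is the engine of the lower bound, and it is entirely absent from your plan.
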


Motivated by this result, we study the edit distance function of $\widetilde{C_n}$ and $P_n$.

\begin{theorem}\label{theorem5}
Let $\mathscr{H}=Forb(\widetilde{C_n})$ and $n\geq 9$.

$$ed_{\mathscr{H}}(p)= \min\left\{\frac{p}{2}, \frac{p(1-p)}{1+(\lceil\frac{n-1}{3}\rceil-2)p}, \frac{1-p}{\lceil\frac{n-3}{2}\rceil}\right\},\ \text{for}\ p\in[0, 1].$$
\end{theorem}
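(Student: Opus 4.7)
My plan is to follow the colored regularity graph (CRG) framework of Marchant--Thomason as adapted by Peck for $Forb(C_n)$ and by Martin in subsequent work. Recall that a CRG $K$ is a complete graph whose vertices (loops) and edges are each colored white or black, and whose value $f(K,p)$ is the optimum of the quadratic program $\min_{\mathbf{x}\in\Delta}\mathbf{x}^{\top}M_K(p)\mathbf{x}$, where $M_K(p)$ places $p$ on white and $1-p$ on black entries. The fundamental theorem reduces the computation of $ed_{\mathscr{H}}(p)$ to an infimum of $f(K,p)$ over those CRGs $K$ such that $\widetilde{C_n}$ is not \emph{$K$-colorable}, i.e.\ no map $V(\widetilde{C_n})\to V(K)$ sends edges of $\widetilde{C_n}$ to white entries and non-edges to black.

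For the upper bound I would exhibit three admissible CRGs, one realising each term. The first is a small CRG analogous to Peck's odd-$C_n$ witness, with program value $p/2$, which fails to color $\widetilde{C_n}$ because $\widetilde{C_n}$ contains the triangle $v_0v_1v_2$ and so has chromatic number at least $3$. The second is a CRG $K_2$ on $r=\lceil (n-1)/3\rceil$ vertices with mixed loop colors, modelled on Peck's middle witness; $\widetilde{C_n}$ is not $K_2$-colorable by a pigeonhole argument on its $(n-1)$-cycle part, and the Lagrangian yields $f(K_2,p)=p(1-p)/\bigl(1+(r-2)p\bigr)$. The third is a white clique on $s=\lceil(n-3)/2\rceil$ black-looped vertices, for which $f(K_3,p)=(1-p)/s$ and coloring fails because the induced subgraph of $\widetilde{C_n}$ on $\{v_0,v_2,\ldots,v_{n-1}\}$ is a cycle of length $n-1>2s$.

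For the lower bound I would adapt Peck's trichotomy on admissible CRGs. The key structural observation is that $\widetilde{C_n}$ decomposes as a triangle $T=v_0v_1v_2$ glued at the pair $\{v_0,v_2\}$ to a path $v_2v_3\cdots v_{n-1}v_0$ of length $n-2$. Any $K$-coloring of $\widetilde{C_n}$ therefore forces both a white triangle inside the white substructure of $K$ and a long alternating walk closing at the triangle's two endpoints. Classifying $K$ by the number of white-looped vertices it contains, and applying Peck-style path-counting in each case, I would show one of: $K$ admits two white-looped vertices joined by a white edge (yielding $f(K,p)\ge p/2$); $K$ is entirely black-looped with $|V(K)|\le s$ (yielding $f(K,p)\ge(1-p)/s$); or $K$ mixes loop colors and $|V(K)|\le r$ (yielding the middle bound).

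The main obstacle will be the mixed case of the lower bound, where the precise count $r=\lceil(n-1)/3\rceil$ and the associated Lagrangian value must be established. Peck's argument for $C_n$ hinges on a longest-path decomposition of the cycle; for $\widetilde{C_n}$ one must simultaneously control the position of the triangle $T$ and the length of the shortened cycle $C_{n-1}$. The triangle consumes three vertices of $\widetilde{C_n}$ but collapses into a single segment of the path decomposition, which is the source of the shift from $n$ to $n-1$ inside the ceiling. Verifying this for both parities of $n\ge 9$, and confirming that the quadratic-program values match the claimed expressions exactly, will be the bulk of the technical work.
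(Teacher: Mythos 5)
Your upper bound is essentially the paper's: the three CRGs you describe correspond exactly to the extreme points $(2,0)$, $\bigl(1,\lceil\frac{n-1}{3}\rceil-1\bigr)$ and $\bigl(0,\lceil\frac{n-3}{2}\rceil\bigr)$ of the clique spectrum, and your non-embeddability justifications (triangle forces non-bipartiteness; the $(n-1)$-cycle on $V(\widetilde{C_n})\setminus\{v_1\}$ cannot be covered by $\lceil\frac{n-3}{2}\rceil$ cliques; pigeonhole on the $(n-1)$-cycle for the mixed point) are sound. The genuine gap is in your lower-bound trichotomy, specifically the case of CRGs with all black vertices. You assert that such a $K$ satisfies $|V(K)|\le s=\lceil\frac{n-3}{2}\rceil$, but this is false: for $p<1/2$ a $p$-core CRG with all black vertices may have white and gray edges, and any such $K$ whose gray graph contains no cycle of length in $\{\lceil\frac{n-1}{2}\rceil,\ldots,n-1\}$ (for instance, a gray graph that is a disjoint union of triangles) forbids an embedding of $\widetilde{C_n}$ while having arbitrarily many vertices. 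What must be bounded is $g_K(p)$, not $|V(K)|$, and this is where all the real work lies: one shows (for $p\ge \lceil\frac{n-1}{3}\rceil^{-1}$, using Martin's weight formulas for $p$-core CRGs) that the gray graph has minimum degree at least $\lceil\frac{n-1}{3}\rceil$ and every two vertices have a common gray neighbor, that it has no gray cycle of length in the middle range and hence none longer than $\lceil\frac{n-1}{2}\rceil-1$, and then either a maximum-length gray path closes into a cycle, forcing the gray graph Hamiltonian and $|V(K)|\le\lceil\frac{n-1}{2}\rceil-1=\lceil\frac{n-3}{2}\rceil$, or else a weighted longest-path argument (the structural lemma of Berikkyzy, Martin and Peck about the unique common neighbor $v_c$ of the path's endpoints, combined with the weight bounds) yields $g_K(p)\ge\frac{1-p}{\lceil\frac{n-1}{2}\rceil-1}\ge\gamma_{\mathscr{H}}(p)$, a contradiction. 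Your proposal contains no substitute for this weighted structural analysis.

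Two further points. First, the degree and codegree bounds above only hold for $p\ge\lceil\frac{n-1}{3}\rceil^{-1}$, so a direct CRG argument does not cover small $p$; the paper finishes the interval $\bigl[0,\lceil\frac{n-1}{3}\rceil^{-1}\bigr]$ by observing that $\gamma_{\mathscr{H}}(p)=p/2$ is linear there, that the two functions agree at both endpoints, and that $ed_{\mathscr{H}}$ is continuous and concave (and similarly $[1/2,1]$ is handled by concavity and $ed_{\mathscr{H}}(1/2)=\gamma_{\mathscr{H}}(1/2)$); your proposal omits this step. Second, you identify the mixed-loop case as the main technical obstacle, but that case is actually the easy one: a CRG with a white vertex can have at most one white vertex (since $\widetilde{C_n}\mapsto K(2,1)$) and at most $\lceil\frac{n-1}{3}\rceil-1$ black vertices, by exhibiting an independent set $S$ whose deletion breaks $\widetilde{C_n}$ into $\lceil\frac{n-1}{3}\rceil$ cliques; this is a short pigeonhole computation, not the bulk of the proof. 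The heart of the matter is the all-black case you dispatched with an incorrect cardinality bound.
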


\begin{theorem}\label{theorem2}
Let $\mathscr{H}=Forb(P_n)$ and $n\geq 3$.

$$ed_{\mathscr{H}}(p)= \min\left\{ \frac{p(1-p)}{1+(\lceil\frac{n-1}{3}\rceil-2)p}, \frac{1-p}{\lceil\frac{n}{2}\rceil-1}\right\},\ \text{for}\ p\in\left[\ \left\lceil (n-1)/3\right\rceil ^{-1}, 1\right].$$
\end{theorem}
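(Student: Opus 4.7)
The plan is to apply the colored regularity graph (CRG) framework of Marchant--Thomason, following Peck's treatment of $Forb(C_n)$ in Theorem \ref{theorem6}. Recall that the edit distance function admits the representation
$$ed_{Forb(H)}(p)=\inf\{g_K(p): K \text{ is a CRG and } H\not\mapsto K\},$$
where $g_K(p)$ is the value of an explicit quadratic program on the standard simplex associated to $K$, and $H\mapsto K$ denotes existence of a colour-respecting map from $H$ into $K$. For $H=P_n$ and $p\in[\lceil(n-1)/3\rceil^{-1},1]$ the task is to identify the CRGs achieving the infimum and to prove that no other CRG can do better.

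For the upper bound I would produce two explicit CRGs. The first is $K_1$: a CRG on $\lceil n/2\rceil-1$ white vertices with all pairwise edges gray. A colour-respecting map of $P_n$ into $K_1$ would partition $V(P_n)$ into at most $\lceil n/2\rceil-1$ independent sets, contradicting the fact that the independence number of $P_n$ equals $\lceil n/2\rceil$; hence $P_n\not\mapsto K_1$, and the quadratic program yields $g_{K_1}(p)=\frac{1-p}{\lceil n/2\rceil-1}$. The second is $K_2$: a CRG on $k:=\lceil(n-1)/3\rceil$ white vertices with exactly one white edge and all remaining edges gray, mirroring Peck's construction for $C_n$ but with one fewer vertex since a path does not have to close up. One checks $P_n\not\mapsto K_2$ by a direct path-tracking argument, and a short Lagrangian computation gives $g_{K_2}(p)=\frac{p(1-p)}{1+(k-2)p}$.

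For the lower bound I would take an arbitrary CRG $K$ with $P_n\not\mapsto K$ and establish a structural dichotomy: either $K$ has at most $\lceil n/2\rceil-1$ vertices with constraints on its colouring forcing $g_K(p)\geq\frac{1-p}{\lceil n/2\rceil-1}$, or $K$ has a non-gray substructure that forces $g_K(p)\geq\frac{p(1-p)}{1+(k-2)p}$. The key tool is a greedy extension lemma: starting from a partial embedding of an initial segment of $P_n$ into $K$, one extends along non-white edges or absorbs consecutive path vertices into black CRG vertices, and only white edges genuinely obstruct extension; counting these obstructions feeds directly into Martin's lower-bound estimate for $g_K(p)$. The restriction $p\geq k^{-1}$ is used precisely where the competing $p/2$ term would otherwise dominate, ruling it out from the minimum and explaining why it disappears from the formula (in contrast to the odd-cycle case). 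The main obstacle, as in Peck's analysis of even cycles, is the case analysis when $K$ mixes black and white vertex colours together with a non-trivial pattern of non-gray edges: showing that every such configuration either admits a map of $P_n$ or satisfies one of the two prescribed lower bounds is where the bulk of the technical work lies, and I expect it to be slightly cleaner than the cycle case since a path has two free endpoints and need not close back onto its starting vertex.
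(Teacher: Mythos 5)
Your upper-bound constructions have the vertex colours backwards, and this is fatal rather than cosmetic. In the embedding convention used here, the preimage of a white vertex must be an independent set of $H$ and the preimage of a black vertex must be a clique; since $P_n$ is bipartite, it embeds into any all-gray CRG with at least two white vertices, so your $K_1$ (all white, all gray, $\lceil n/2\rceil-1\ge 2$ vertices) and your $K_2$ (all white, one white edge, $k=\lceil (n-1)/3\rceil\ge 3$ vertices) both admit $P_n$ and give no upper bound. Your non-embedding argument for $K_1$ is also a non sequitur: a map into $K_1$ partitions $V(P_n)$ into independent sets, which is obstructed by the chromatic number (here $2$), not by the independence number. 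Moreover the $g$-values you assert are not those of your CRGs: for an all-white CRG the matrix $M_K(p)$ has entries only $p$ and $0$, so $g_{K_1}(p)=p/(\lceil n/2\rceil-1)$ and $g_{K_2}(p)$ is likewise proportional to $p$. The values $\frac{1-p}{\lceil n/2\rceil-1}$ and $\frac{p(1-p)}{1+(\lceil\frac{n-1}{3}\rceil-2)p}$ are attained by the gray-edge CRGs $K(0,\lceil n/2\rceil-1)$ (all black) and $K(1,\lceil\frac{n-1}{3}\rceil-1)$ (one white, rest black), which are the ones that matter here; non-embedding holds because $P_n$ admits no partition into fewer than $\lceil n/2\rceil$ cliques, respectively no partition into one independent set plus fewer than $\lceil\frac{n-1}{3}\rceil$ cliques.

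The lower bound is where the real work lies, and your sketch does not contain it; you also misplace the difficulty. The mixed black/white case is quick: since $P_n\mapsto K(2,1)$, a counterexample CRG with a white vertex has exactly one, and then at most $\lceil\frac{n-1}{3}\rceil-1$ black vertices, so $g_K(p)\ge\frac{p(1-p)}{1+(\lceil\frac{n-1}{3}\rceil-2)p}$. The hard case, after reducing to $p$-core CRGs via Marchant--Thomason, is a CRG with all black vertices and only white or gray edges. There the paper (i) uses Martin's identity $d_G(u)=\frac{p-g_K(p)}{p}+\frac{1-2p}{p}\mathbf{x}(u)$ to show every vertex has gray degree at least $\lceil\frac{n-1}{3}\rceil$ and every pair of vertices has a common gray neighbour; this is exactly where the hypothesis $p\ge\lceil\frac{n-1}{3}\rceil^{-1}$ enters, not to ``rule out a $p/2$ term'' (there is no $(2,0)$ point in the clique spectrum of $Forb(P_n)$ at all, since $P_n\mapsto K(2,0)$); (ii) shows the gray graph $F$ has no path of length greater than $\lceil\frac{n}{2}\rceil-1$, because black vertices absorb edges of $P_n$ so a long gray path would host $P_n$; and (iii) runs a maximum-length-path argument: if some longest path of $F$ closes into a cycle, $F$ is Hamiltonian and $|V(K)|\le\lceil\frac{n}{2}\rceil-1$ forces $g_K(p)\ge\frac{1-p}{\lceil n/2\rceil-1}$; otherwise the endpoints have a unique common neighbour $v_c$ on the path, predecessors of neighbours of $v_1$ are themselves endpoints of longest paths and hence have weight at most $\mathbf{x}(v_1)$, and the resulting weight inequality again yields $g_K(p)\ge\frac{1-p}{c}\ge\frac{1-p}{\lceil n/2\rceil-1}$, a contradiction. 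Your ``greedy extension lemma'' and the claim that counting white-edge obstructions ``feeds directly into Martin's lower-bound estimate'' are not formulated precisely enough to replace any of these steps, so as written the proposal establishes neither the upper nor the lower bound.
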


Our paper is organized as follows. Some definitions and tools are explained in Section \ref{Sec2}. We prove Theorems \ref{theorem1}, \ref{theorem5} and \ref{theorem2}, in Sections \ref{Sec3}, \ref{Sec5} and \ref{Sec4}, respectively.

\section{Definitions and Tools}\label{Sec2}
All graphs considered in this paper are simple. The standard graph theory notation not defined here will conform to that in \cite{gtwa}. The edit distance notation not defined here will conform to that in \cite{r24}.

In order to estimate the edit distance function, Alon and Stav \cite{r2} defined a \emph{colored regularity graph} (CRG) $K$ as follows. Let $K$ be a simple
complete graph, together with a partition of the vertices into white and black,
and a partition of the edges into white, gray, and black. Denote by $VW(K)$ and $VB(K)$ the set of white vertices and the set of black vertices, respectively. Then $V(K)=VW(K)\cup VB(K)$. Denote by $EW(K)$, $EG(K)$ and $EB(K)$ the set of white edges, the set of gray edges and the set of black edges, respectively. Then we have $E(K)=EW(K)\cup EG(K)\cup EB(K)$. A CRG $K'$ is said to be a \emph{sub}-CRG of $K$ if $K'$ can be obtained by deleting vertices of $K$ and is a \emph{proper sub}-CRG if $K'\not=K$.

We say that a graph $H$ \emph{embeds in} $K$ (writing $H\mapsto K$), if there is a function $\varphi: V(H)\rightarrow V(K)$ so that if $h_1h_2\in E(H)$, then either $\varphi(h_1)=\varphi(h_2)\in VB(K)$ or $\varphi(h_1)\varphi(h_2)\in EB(K)\cup EG(K)$ and if $h_1h_2\not\in E(H)$, then either $\varphi(h_1)=\varphi(h_2)\in VW(K)$ or $\varphi(h_1)\varphi(h_2)\in EW(K)\cup EG(K)$.
For a hereditary property $\mathscr{H}$, we denote $\mathscr{K}(\mathscr{H})$ to be the subset of CRGs $K$ such that any graph $H\in \mathscr{F}(\mathscr{H})$ does not embed in $K$. That is,  $\mathscr{K}(\mathscr{H})=\{K: H\not\mapsto K, \forall H\in \mathscr{F}(\mathscr{H})\}$.

For a hereditary property $\mathscr{H}$, we can use the $g$ function of each CRG $K$ to compute the edit distance function, where $g$ function is defined by

\begin{equation}\label{e2}
g_K(p)= \min \left\{\mathbf{x}^{T}M_K(p)\mathbf{x}:\mathbf{x}^{T}\mathbf{1}=1,\mathbf{x} \mathbf{\geq 0}\right\},
\end{equation}
and
$$ [M_K(p)]_{ij}=\begin{cases}
p           & \text{if}\ v_iv_j\in EW(K)\ \text{or}\ v_i=v_j\in VW(K); \\
1-p         & \text{if}\ v_iv_j\in EB(K)\ \text{or}\ v_i=v_j\in VB(K);       \\
0           & \text{if}\ v_iv_j\in EG(K).
\end{cases}$$

Marchant and Thomason in \cite{r32} proved that for every $p\in[0,1]$, there is a CRG $K\in\mathscr{K}(\mathscr{H})$ such that $ed_\mathscr{H}(p)=g_K(p)$. That is

\begin{prop}[\cite{r32}]\label{pro7}
Let $\mathscr{H}$ be a nontrivial hereditary property. For $p\in[0, 1]$,
$$ed_\mathscr{H}(p)=\min \{g_K(p):K\in\mathscr{K}(\mathscr{H})\}.$$
\end{prop}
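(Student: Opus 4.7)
The plan is to invoke Proposition~\ref{pro7} and identify the CRGs in $\mathscr{K}(Forb(P_n))$ whose $g$-values realise the minimum in Theorem~\ref{theorem2}.

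\emph{Upper bound.} Set $k=\lceil(n-1)/3\rceil$ and consider two all-gray CRGs. Let $K_1$ be the CRG on $\lceil n/2\rceil-1$ vertices, all black, with every edge gray. For any embedding $\varphi:V(P_n)\to V(K_1)$, each preimage $\varphi^{-1}(v)$ must be a clique of $P_n$ and hence have size at most $2$; partitioning $V(P_n)$ into such cliques needs at least $\lceil n/2\rceil$ parts, so $P_n\not\mapsto K_1$. Then $M_{K_1}(p)=(1-p)I$ and the reciprocal-sum formula $g_K(p)=(\sum_v 1/p_v)^{-1}$ for all-gray CRGs yields $g_{K_1}(p)=(1-p)/(\lceil n/2\rceil-1)$. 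Let $K_2$ be the all-gray CRG with one white vertex and $k-1$ black vertices. An embedding of $P_n$ into $K_2$ corresponds to an independent set $I\subseteq V(P_n)$ (the preimage of the white vertex) together with a partition of $V(P_n)\setminus I$ into at most $k-1$ cliques of size $\leq 2$, i.e.\ $n-|I|-\mu(P_n[V\setminus I])\leq k-1$ where $\mu$ denotes the matching number. A block-of-three calculation (choose $I=\{1,4,7,\ldots\}$ for the upper direction, and a chunk/parity argument $2(|M|+|S|)=(n-|I|)+q$ for the lower direction, where $q$ counts odd-size chunks of $V\setminus I$) shows $\min_I(n-|I|-\mu)=k$, so $P_n\not\mapsto K_2$, and the reciprocal-sum formula gives $g_{K_2}(p)=p(1-p)/(1+(k-2)p)$. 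Taking $\min\{g_{K_1},g_{K_2}\}$ over these two CRGs establishes the upper bound on $[k^{-1},1]$.

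\emph{Lower bound.} I would show that every $K\in\mathscr{K}(Forb(P_n))$ satisfies $g_K(p)\geq\min\{g_{K_1}(p),g_{K_2}(p)\}$ throughout $[k^{-1},1]$, following the Peck–Martin scheme. The first step is a structural reduction restricting attention to all-gray CRGs: standard CRG manipulations (identifying sub-CRGs, recoloring dominated vertices, discarding redundant edges) show that any $K$ either reduces to an all-gray sub-CRG of no larger $g$-value that still forbids $P_n$, or else directly satisfies $g_K\geq\min\{g_{K_1},g_{K_2}\}$. For an all-gray $K$ with $w$ white and $b$ black vertices, the embedding analysis from the upper bound shows that $P_n\not\mapsto K$ forces $(w,b)\in\{(0,b'):b'\leq\lceil n/2\rceil-1\}\cup\{(1,b''):b''\leq k-1\}$, since $w\geq 2$ immediately admits a $2$-colouring of the bipartite $P_n$. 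In the first regime the reciprocal-sum formula gives $g_K(p)\geq(1-p)/(\lceil n/2\rceil-1)=g_{K_1}(p)$, and in the second $g_K(p)\geq p(1-p)/(1+(k-2)p)=g_{K_2}(p)$. The interval constraint $p\geq k^{-1}$ ensures that degenerate single-vertex CRGs (with $g$-values $p$ or $1-p$) never become competitive.

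\emph{Main obstacle.} The principal difficulty is justifying the reduction to all-gray CRGs in the lower bound. Replacing a non-gray edge by a gray one lowers $g_K$ but enlarges the class of admissible embeddings, so it may destroy the property $P_n\not\mapsto K$ and thus cannot be applied blindly. One must therefore carry out a case analysis across every coloured-edge configuration — either finding a structural reduction that preserves non-embedding, or directly lower-bounding $g_K$ via a judicious feasible vector $\mathbf{x}$ in \eqref{e2} together with the concavity and continuity provided by Proposition~\ref{pro8}. The bipartiteness of $P_n$ shortens several cases (CRGs with $\geq 2$ white vertices trivially admit $P_n$), but the interaction between coloured edges and the matching/independence structure of $P_n$, together with the sharp handling of the boundary at $p=k^{-1}$, still requires close attention.
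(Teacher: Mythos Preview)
You have misidentified the target. The statement in question is Proposition~\ref{pro7}, the general Marchant--Thomason result that $ed_{\mathscr{H}}(p)=\min\{g_K(p):K\in\mathscr{K}(\mathscr{H})\}$ for \emph{every} nontrivial hereditary property $\mathscr{H}$. Your proposal instead attempts to prove Theorem~\ref{theorem2} (the $Forb(P_n)$ edit distance function), and indeed opens by \emph{invoking} Proposition~\ref{pro7} as a tool. The paper does not prove Proposition~\ref{pro7} at all: it is quoted from \cite{r32}, and its proof there rests on the weighted-multigraph/regularity machinery of Marchant and Thomason --- none of which your write-up touches. Nothing you wrote bears on why the minimum over $g_K$ equals $ed_{\mathscr{H}}$ in general.

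Even viewed as a plan for Theorem~\ref{theorem2}, your lower-bound strategy has a genuine gap. You propose to ``reduce to all-gray CRGs,'' but, as you acknowledge, greying a white edge can destroy $P_n\not\mapsto K$, so no blanket reduction is available; the vague ``standard CRG manipulations'' you invoke do not exist in the form you need. The paper does \emph{not} reduce to all-gray CRGs. It restricts to $p$-core CRGs (Proposition~\ref{pro1} forces only black vertices with white/gray edges for $p<1/2$), then studies the gray subgraph $F$ directly: the weighted degree identities of Proposition~\ref{pro2}, the degree/codegree lower bounds of Proposition~\ref{pro13}, the gray-path-length ceiling of Proposition~\ref{pro12}, and the structural path lemma Proposition~\ref{pro6} combine to force $g_K(p)\geq(1-p)/(\lceil n/2\rceil-1)$ for any all-black $p$-core $K$ with $g_K(p)<\gamma_{\mathscr{H}}(p)$. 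Your upper bound via $K_1,K_2$ is fine and matches Lemma~\ref{theorem3}, but the lower bound requires this weighted path argument, not an all-gray reduction.
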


In \cite{r32}, the authors also proved that in order to find such CRGs, we only need to look at all $p$-core CRGs. A CRG $K$ is $p$-core if, for any proper sub-CRG $K'$ of $K$, we have $g_{K'}(p)>g_{K}(p)$.

The \emph{gray-edge} CRG $K(r, s)$ is the CRG $K$ with $r$ white vertices, $s$ black vertices and all edges gray. The \emph{clique spectrum} of $\mathscr{H}$ is the set $\Gamma(\mathscr{H}):=\{(r, s): H\not\mapsto K(r, s),\ \forall H\in \mathscr{F}(\mathscr{H})\}$. Clearly, we obtain

\begin{prop}[\cite{r24}]
Let $\mathscr{H}$ be a nontrivial hereditary property and $\Gamma(\mathscr{H})$ denote the clique spectrum of $\mathscr{H}$. If we define
$$\gamma_{\mathscr{H}}(p):= \min_{(r,s)\in\Gamma(\mathscr{H})}g_{K(r,s)}(p)=\min_{(r,s)\in\Gamma(\mathscr{H})}\frac{p(1-p)}{r(1-p)+sp},$$
then $ed_{\mathscr{H}}(p)\leq\gamma_{\mathscr{H}}(p)$.
\end{prop}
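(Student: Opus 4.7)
The plan is to combine the definitions with Proposition \ref{pro7}. Observe that the proposition has two components: first, an upper bound on $ed_{\mathscr{H}}(p)$ obtained by restricting to gray-edge CRGs, and second, an explicit formula for $g_{K(r,s)}(p)$. I would handle these in sequence.

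First, I would unwind the definition of the clique spectrum. If $(r,s)\in\Gamma(\mathscr{H})$, then by definition no forbidden graph $H\in\mathscr{F}(\mathscr{H})$ embeds in the gray-edge CRG $K(r,s)$. This is exactly the condition that $K(r,s)\in\mathscr{K}(\mathscr{H})$. Proposition \ref{pro7} then gives $ed_{\mathscr{H}}(p)\leq g_{K(r,s)}(p)$ for every such pair $(r,s)$, and taking the minimum over $\Gamma(\mathscr{H})$ yields $ed_{\mathscr{H}}(p)\leq\gamma_{\mathscr{H}}(p)$.

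Second, I would evaluate $g_{K(r,s)}(p)$ explicitly. By the definition of $M_K(p)$, the matrix $M_{K(r,s)}(p)$ is diagonal, with entry $p$ in each of the $r$ positions corresponding to white vertices and entry $1-p$ in each of the $s$ positions corresponding to black vertices (all off-diagonal entries vanish because every edge is gray). Writing $\mathbf{x}=(x_1,\dots,x_{r+s})$, the problem reduces to minimizing
$$ p\sum_{i=1}^{r}x_i^2+(1-p)\sum_{j=r+1}^{r+s}x_j^2 $$
subject to $\sum_{i}x_i=1$ and $\mathbf{x}\geq 0$. A standard Lagrange multiplier argument (or Cauchy--Schwarz applied to the vectors $(x_i\sqrt{a_i})$ and $(1/\sqrt{a_i})$, where $a_i$ is the diagonal coefficient of $x_i^2$) shows that the optimum is achieved by setting $x_i$ proportional to $1/a_i$, i.e.\ $x_i=(1-p)/C$ for each white index and $x_j=p/C$ for each black index, where $C=r(1-p)+sp$. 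Substituting back yields $g_{K(r,s)}(p)=\frac{p(1-p)}{r(1-p)+sp}$, matching the claimed expression.

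There is no real obstacle here: once one recognizes that the condition defining $\Gamma(\mathscr{H})$ is precisely the membership condition for $\mathscr{K}(\mathscr{H})$ in the special case of gray-edge CRGs, the inequality is immediate from Proposition \ref{pro7}, and the only computational work is the one-variable-type optimization for $g_{K(r,s)}$. The only minor care needed is to check that the Lagrange optimum lies in the nonnegative orthant, which is obvious since $p,1-p\geq 0$, so no boundary case analysis is required.
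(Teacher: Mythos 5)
Your proof is correct and is exactly the standard argument the paper implicitly relies on (the paper states this proposition with ``Clearly'' and cites \cite{r24}, giving no proof of its own): the condition defining $\Gamma(\mathscr{H})$ is precisely membership of $K(r,s)$ in $\mathscr{K}(\mathscr{H})$, so Proposition \ref{pro7} gives the inequality, and the diagonal quadratic program yields $g_{K(r,s)}(p)=\frac{p(1-p)}{r(1-p)+sp}$ as you computed. The only (negligible) caveat is that the Lagrange/Cauchy--Schwarz step assumes $p\in(0,1)$; the endpoints follow by continuity.
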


Let $K$ be a $p$-core CRG, $v\in V(K)$, and let $\mathbf{x}$ be an optimal weight vector in the quadratic program (\ref{e2}) that defines $g_K(p)$. The \emph{weight} of $v$, denoted by $\mathbf{x}(v)$, is the entry corresponding to $v$ of the vector $\mathbf{x}$. We denote the \emph{gray neighborhood} of $v$ by $N_G(v)=\left\{v{'}\in V(K): vv{'}\in EG(K)\right\}$. The \emph{weighted gray degree} of vertex $v\in V(K)$ is $d_G(v)=\Sigma_{v{'}\in N_G(v)}\mathbf{x}(v{'})$ and the number of vertices adjacent to $v$ via gray edges is denoted by $deg_G(v)$, i.e., $deg_G(v)=|N_G(v)|$. We use similar notation for the white and black cases. Now we get $d_G(v)+d_W(v)+d_B(v)=1$ for each $v\in V(K)$.

The \emph{weighted gray codegree} of vertices $v$ and $v'$, denoted by $d_G(v, v')$, is the sum of the weights of the common gray neighbors of $v$ and $v'$. Denote the number of common gray neighbors of vertices $v$ and $v'$ by $deg_G(v, v')$.

Marchant and Thomason {\cite{r32}} gave the following characterization of all $p$-core CRGs.

\begin{prop}[\cite{r32}]\label{pro1}
Let $K$ be a $p$-core CRG.

$(a)$ If $p\leq1/2$, then there are no black edges, and the white edges are only incident to black vertices.

$(b)$ If $p\geq1/2$, then there are no white edges, and the black edges are only incident to white vertices.
\end{prop}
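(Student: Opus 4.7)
Since (b) follows from (a) by the color-swap symmetry (exchange $VW\leftrightarrow VB$ and $EW \leftrightarrow EB$ and replace $p$ by $1-p$, under which $M_K(p)$ is preserved and the $p$-core property is mapped to the $(1-p)$-core property), it suffices to prove (a). So assume $p \le 1/2$ and let $K$ be a $p$-core CRG with an optimal weight vector $\mathbf{x}$ in (\ref{e2}).

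First I would establish two properties of $\mathbf{x}$. Every vertex carries positive weight, for otherwise dropping a zero-weight vertex produces a proper sub-CRG $K'$ on which the restriction of $\mathbf{x}$ remains feasible with $g_{K'}(p) \le g_K(p)$, contradicting $p$-coreness. Once $\mathbf{x}$ lies in the open simplex, the Karush--Kuhn--Tucker conditions for the equality-constrained quadratic program yield
$$M_K(p)\,\mathbf{x} = g_K(p)\,\mathbf{1}.$$
Using this identity, for any two vertices $u,v$ the one-parameter family $\mathbf{x}_t = \mathbf{x} + t(e_u - e_v)$ respects $\mathbf{1}^{T}\mathbf{x}_t = 1$ and satisfies
$$\mathbf{x}_t^{T} M_K(p)\,\mathbf{x}_t = g_K(p) + t^{2}\Delta_{uv}, \qquad \Delta_{uv} := [M_K(p)]_{uu} + [M_K(p)]_{vv} - 2[M_K(p)]_{uv}.$$
If $\Delta_{uv}<0$, minimality of $\mathbf{x}$ fails; if $\Delta_{uv}=0$, pushing $t$ until some coordinate of $\mathbf{x}_t$ hits zero yields a proper sub-CRG with $g_{K'}(p)\le g_K(p)$, again violating the $p$-core hypothesis. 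Hence $\Delta_{uv} > 0$ for every pair $u,v$ in $K$.

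I would then run the coloring case analysis under $p \le 1/2$: a black edge between two black vertices gives $\Delta = 0$; a black edge between two white vertices gives $\Delta = 4p-2\le 0$; a black edge between a white and a black vertex gives $\Delta = 2p-1\le 0$; and a white edge between two white vertices gives $\Delta = 0$. Each is $\le 0$ and hence impossible in a $p$-core CRG, so no black edges survive and no white edge has both endpoints white.

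The only configuration not excluded by the transfer perturbation is a white edge between a white vertex $v_1$ and a black vertex $v_2$, for which $\Delta_{v_1v_2} = 1-2p$ is strictly positive when $p<1/2$. Handling this case is the main obstacle. My plan is to combine the deletion of $v_1$ with a redistribution of its weight (most naturally onto $v_2$, together with a second balancing transfer to a gray neighbor of $v_1$), exploiting the feasibility bound $g_K(p)\le p$ (obtained by evaluating (\ref{e2}) at the weight vector concentrated on the white vertex $v_1$) together with the KKT identity above. The heuristic is that the cost of the white edge $v_1v_2$ in the original quadratic form should exactly offset the loss of the white diagonal entry at $v_1$ upon deletion; turning this intuition into a rigorous sub-CRG witness of $g_{K'}(p)\le g_K(p)$ is the technical crux I expect to occupy most of the work.
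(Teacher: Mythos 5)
Your reduction of (b) to (a), the positivity of the optimal weights, the stationarity identity $M_K(p)\mathbf{x}=g_K(p)\mathbf{1}$, and the second-order test $\Delta_{uv}>0$ are all sound, and they do rule out every black edge and every white edge joining two white vertices. But Proposition \ref{pro1}(a) asserts more: a white edge must have \emph{both} ends black. The configuration you postpone --- a white edge $v_1v_2$ with $v_1$ white and $v_2$ black --- is therefore not a residual technicality but the substantive half of the statement for $p<1/2$ (at $p=1/2$ it has $\Delta_{v_1v_2}=1-2p=0$, so your boundary-push does dispose of it), and your write-up offers only a heuristic for it. Moreover, the heuristic cannot work as described. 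Since $M_K(p)\mathbf{x}=g_K(p)\mathbf{1}$, replacing $\mathbf{x}$ by $\mathbf{x}+\mathbf{z}$ with $\mathbf{z}^T\mathbf{1}=0$ changes the objective by exactly $\mathbf{z}^TM_K(p)\mathbf{z}$. Consider the (a priori possible) configuration in which $v_2$ is the \emph{only} white neighbour of $v_1$, so that every other vertex is joined to $v_1$ by a gray edge (black edges being already excluded). Deleting $v_1$ and redistributing its weight, i.e.\ taking $\mathbf{z}=-\mathbf{x}(v_1)e_{v_1}+a\,e_{v_2}+\sum_w b_w e_w$ where $a,b_w\ge 0$, $a+\sum_w b_w=\mathbf{x}(v_1)$ and $w$ ranges over gray neighbours of $v_1$, gives, since all entries of $M_K(p)$ are nonnegative and $[M_K(p)]_{v_1w}=0$,
$$\mathbf{z}^TM_K(p)\mathbf{z}\;\ge\;p\,\mathbf{x}(v_1)^2-2pa\,\mathbf{x}(v_1)+(1-p)a^2\;=\;p\bigl(\mathbf{x}(v_1)-a\bigr)^2+(1-2p)a^2\;>\;0$$
for $p<1/2$. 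So every witness of the kind you propose strictly \emph{increases} the quadratic form and cannot certify $g_{K'}(p)\le g_K(p)$; the hoped-for ``exact offset'' between the white edge $v_1v_2$ and the diagonal entry at $v_1$ does not occur.

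Note also that there is no proof in this paper to compare against: Proposition \ref{pro1} is quoted from Marchant and Thomason \cite{r32}, and eliminating precisely this mixed white edge is the nontrivial content of their result; it needs more global information about $p$-core CRGs than the local first- and second-order optimality conditions you employ. As it stands, your argument establishes only the weaker statement ``no black edges and no white edge with two white ends,'' and part (a) --- hence also part (b) --- remains unproven.
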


Martin \cite{r13} gave a formula for $d_G(v)$ for all $v\in V(K)$ and a bound on the weight of each $v$.

\begin{prop}[\cite{r13}]\label{pro2}
Let $p\in(0,1)$ and $K$ be a p-core CRG with optimum weight vector $\mathbf{x}$.

$(a)$ If $p\leq1/2$, then $\mathbf{x}(v)=g_K(p)/p$ for all $v\in VW(K)$ and $$\mathbf{x}(u)\leq g_K(p)/(1-p),\ d_G(u)=\frac{p-g_K(p)}{p}+\frac{1-2p}{p}\mathbf{x}(u), \ \text{for}\ \text{each}\ u\in VB(K).$$

 $(b)$ If $p\geq1/2$, then $\mathbf{x}(u)=g_K(p)/(1-p)$ for all $u\in VB(K)$ and $$\mathbf{x}(v)\leq g_K(p)/p,\ d_G(v)=\frac{1-p-g_K(p)}{1-p}+\frac{2p-1}{1-p}\mathbf{x}(v),\ \text{for}\ \text{each}\ v\in VW(K).$$
\end{prop}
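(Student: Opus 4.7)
The plan is to apply the Karush--Kuhn--Tucker conditions to the convex quadratic program defining $g_K(p)$ in (\ref{e2}). First I would argue that, because $K$ is $p$-core, every coordinate of any optimal $\mathbf{x}$ is strictly positive: otherwise the sub-CRG $K'$ obtained by deleting a vertex $v_0$ of weight zero would admit the restriction of $\mathbf{x}$ as a feasible vector with the same objective value, giving $g_{K'}(p)\leq g_K(p)$ and contradicting $p$-coreness. With strict positivity in hand, only the equality constraint $\mathbf{x}^T\mathbf{1}=1$ is active, so stationarity of the Lagrangian $\mathbf{x}^T M_K(p)\mathbf{x} - \lambda(\mathbf{x}^T\mathbf{1}-1)$ yields $2[M_K(p)\mathbf{x}]_v = \lambda$ for every $v\in V(K)$; multiplying by $\mathbf{x}(v)$ and summing forces $\lambda=2g_K(p)$, and hence $[M_K(p)\mathbf{x}]_v = g_K(p)$ for every $v\in V(K)$.

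Assume next that $p\leq 1/2$ and fix $v\in VW(K)$. By Proposition \ref{pro1}(a) every edge incident to $v$ must be gray (a white edge would require both endpoints to be black, and there are no black edges), so the only nonzero contribution to the row sum at $v$ is the diagonal entry $p$. Thus $p\cdot\mathbf{x}(v)=g_K(p)$, giving $\mathbf{x}(v)=g_K(p)/p$. For $u\in VB(K)$, Proposition \ref{pro1}(a) tells me that edges from $u$ to white vertices are gray while edges from $u$ to other black vertices are white or gray; so the row sum at $u$ equals $(1-p)\mathbf{x}(u)+p\cdot d_W(u)$, and setting this equal to $g_K(p)$ together with $d_W(u)\geq 0$ immediately gives $\mathbf{x}(u)\leq g_K(p)/(1-p)$. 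Because $u$ has no black edges, the weight identity $\mathbf{x}(u)+d_G(u)+d_W(u)=1$ lets me eliminate $d_W(u)$ and rearrange to
\[
d_G(u)=\frac{p-g_K(p)}{p}+\frac{1-2p}{p}\mathbf{x}(u),
\]
as required.

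Part (b) I would derive from (a) by the standard complementation symmetry: passing to the CRG $\overline{K}$ obtained by swapping the white and black classes on both vertices and edges (leaving gray fixed) sends $M_K(p)$ to $M_{\overline{K}}(1-p)$, so $g_{\overline{K}}(1-p)=g_K(p)$, and $\overline{K}$ is $(1-p)$-core precisely when $K$ is $p$-core. Applying part (a) to $\overline{K}$ at the parameter $1-p\leq 1/2$ and translating the conclusions back to $K$ yields (b). The only real subtlety in this plan is the positivity assertion at the top; once that is secured, everything else is a direct KKT computation combined with the structural information provided by Proposition \ref{pro1}.
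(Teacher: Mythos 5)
The paper states this proposition without proof, citing Martin \cite{r13}; your argument is correct and is essentially the proof from that source: strict positivity of all weights via $p$-coreness, the resulting first-order condition $[M_K(p)\mathbf{x}]_v=g_K(p)$ for every $v$, the structural facts of Proposition~\ref{pro1} to evaluate the row sums, and complementation for part (b). One small nit: $M_K(p)$ need not be positive semidefinite, so the program is not always convex; but since the constraints are linear, KKT stationarity is still a valid necessary condition at the optimum, so your derivation stands as written.
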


The following results will be used in this paper.

\begin{prop}[\cite{r13}]\label{pro3}
Let $p\in(0,1/2)$ and $K$ be a p-core CRG with black vertices and white or gray edges.

$(a)$ If $K$ has no gray 3-cycle, then $g_K(p)> p/2$.

 $(b)$ If $K$ has a gray 3-cycle, but no gray $C_4^+$(that is, four vertices that induce 5 gray edges), then $g_K(p)\geq min\left\{2p/3, (1-p)/3\right\}$.
\end{prop}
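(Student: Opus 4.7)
The plan is to exploit the KKT-type identity for $d_G(u)$ on a black vertex (Proposition \ref{pro2}(a)) together with a disjointness argument for gray neighborhoods forced by the forbidden substructure.

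For part (a), the main observation is that if $v_1v_2$ is any gray edge in $K$, the absence of a gray 3-cycle forces $N_G(v_1)\cap N_G(v_2)=\emptyset$ outside of $\{v_1,v_2\}$, and combined with $v_1\in N_G(v_2)$, $v_2\in N_G(v_1)$ this gives $d_G(v_1)+d_G(v_2)\le 1$. Since $K$ has only black vertices by hypothesis, I substitute
\[
d_G(v_i)=\frac{p-g_K(p)}{p}+\frac{1-2p}{p}\mathbf{x}(v_i)
\]
from Proposition \ref{pro2}(a) and clear $p$ to obtain $2g_K(p)\ge p+(1-2p)(\mathbf{x}(v_1)+\mathbf{x}(v_2))>p$, where strictness uses $p<1/2$ together with the positivity of weights in a $p$-core CRG. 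The only remaining case is that $K$ has no gray edge at all, in which case every edge is white by Proposition \ref{pro1}(a) and a direct optimization on the uniform vector yields $g_K(p)=p+(1-2p)/|V(K)|>p$, which is certainly stronger than $p/2$.

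For part (b), let $v_1v_2v_3$ be a gray 3-cycle. The no $C_4^+$ hypothesis means that for every vertex $u\notin\{v_1,v_2,v_3\}$ at most one of the three edges $uv_1,uv_2,uv_3$ is gray; otherwise the four vertices $\{u,v_1,v_2,v_3\}$ would span $\ge 5$ gray edges. Hence the sets $N_G(v_i)\setminus\{v_1,v_2,v_3\}$ are pairwise disjoint. Writing $S=\mathbf{x}(v_1)+\mathbf{x}(v_2)+\mathbf{x}(v_3)$ and separating the contribution of the triangle from the rest gives
\[
d_G(v_1)+d_G(v_2)+d_G(v_3)\le 2S+(1-S)=1+S.
\]
Substituting the formula for $d_G(v_i)$ and simplifying produces $3g_K(p)\ge 2p+(1-3p)S$. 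A case split on the sign of $1-3p$ then finishes the proof: if $p\le 1/3$ one immediately reads off $g_K(p)\ge 2p/3$, while if $p>1/3$ one applies the weight bound $\mathbf{x}(v_i)\le g_K(p)/(1-p)$ from Proposition \ref{pro2}(a), so $S\le 3g_K(p)/(1-p)$, and substituting this back yields $g_K(p)\ge (1-p)/3$.

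The delicate step, and essentially the only non-routine point, is the bookkeeping on the triangle in part (b): each of $v_1,v_2,v_3$ lies in the gray neighborhood of the other two, contributing $2S$ to the sum of weighted gray degrees, while the disjointness argument only controls the remaining vertices and contributes at most $1-S$. Keeping these two contributions separate, and then choosing the correct direction of the weight bound according to the sign of $1-3p$, is what produces the $\min\{2p/3,(1-p)/3\}$ appearing in the conclusion.
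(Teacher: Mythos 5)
Your proof is correct, and it follows the standard argument for this result (which the paper itself does not prove but imports from Martin~\cite{r13}): the degree identity of Proposition~\ref{pro2}(a) summed over a gray edge (resp.\ gray triangle), combined with disjointness of the external gray neighborhoods forced by the absence of a gray triangle (resp.\ gray $C_4^+$), and the weight bound $\mathbf{x}(v)\le g_K(p)/(1-p)$ in the regime $p>1/3$. The bookkeeping checks out: $3g_K(p)\ge 2p+(1-3p)S$ together with $S\le 3g_K(p)/(1-p)$ does yield $g_K(p)\ge (1-p)/3$ for $p>1/3$, and positivity of all weights in a $p$-core CRG (any zero-weight vertex could be deleted, contradicting $p$-coreness) justifies the strict inequality in part~(a).
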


\begin{prop}[\cite{r5}]\label{pro5}
Let $F$ be a connected graph. If some path of maximum length forms a cycle, then $F$ is Hamiltonian.
\end{prop}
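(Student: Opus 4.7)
The plan is to argue by contradiction against the existence of a vertex outside the purported Hamilton cycle. Let $P=v_1v_2\cdots v_k$ be a path of maximum length in $F$, and assume that $v_1v_k\in E(F)$, so the vertices of $P$ span a cycle $C$ of length $k$ in $F$. I want to show $V(C)=V(F)$, which immediately gives that $C$ is a Hamilton cycle of $F$.

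Suppose instead that there is a vertex $u\in V(F)\setminus V(C)$. Since $F$ is connected, there is a path in $F$ joining $V(C)$ to $V(F)\setminus V(C)$; picking any such path and taking the first vertex off $C$ on it, I may assume without loss of generality that $u$ itself has a neighbor $v_i\in V(C)$. Now traverse $C$ starting at $v_i$ in either direction: this produces a path $v_i,v_{i+1},\ldots,v_{i-1}$ (indices read modulo $k$) of length $k-1$ on all $k$ vertices of $C$. Prepending the edge $uv_i$ yields a path on $k+1$ vertices in $F$, which contradicts the maximality of $P$.

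Therefore every vertex of $F$ lies on $C$, so $C$ is a Hamilton cycle and $F$ is Hamiltonian. The only point that warrants attention is the choice of $u$ adjacent to $V(C)$, but this is immediate from the connectedness of $F$: any shortest path from $V(C)$ to a vertex outside $C$ has its second vertex of the required type, since otherwise one could delete its initial edge. Overall the argument is short and structural, with no quantitative obstacle; the main idea is simply that a longest path whose endpoints coincide on a cycle leaves no ``room'' for an outside pendant without contradicting maximality.
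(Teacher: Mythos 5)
Your argument is correct: it is the standard proof that a longest path whose endpoints are adjacent must span the whole (connected) graph, since any vertex off the resulting cycle with a neighbor on it would extend to a strictly longer path. Note that the paper itself only cites this proposition from Berikkyzy, Martin and Peck without reproducing a proof, and your argument is exactly the expected one.
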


\begin{prop}[\cite{r5}]\label{pro6}
Let $F$ be a graph on $n$ vertices with no cycle of length longer than $\lceil\frac{n}{2}\rceil-1$, with every vertex having degree at least $\lceil\frac{n-1}{3}\rceil\geq 2$ and with every pair of vertices having at least one common neighbor. Furthermore, let $F$ have the property that no maximum length path forms a cycle.

Let $v_1 \ldots v_\ell$ be a path of maximum length in $F$. Then $v_1$ and $v_\ell$ have exactly one common neighbor $v_c$ on this path. Furthermore, $N(v_1)\subseteq\{v_2, \ldots, v_c\}$ and $N(v_\ell)\subseteq\{v_c, \ldots, v_{\ell-1}\}$.
\end{prop}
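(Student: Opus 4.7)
The plan is to exploit the maximality of $P = v_1 v_2 \cdots v_\ell$ together with the cycle-length and minimum-degree bounds. First, since $P$ has maximum length, any neighbor of $v_1$ (respectively $v_\ell$) must already lie on $P$; otherwise $P$ could be extended, contradicting maximality. The hypothesis that no maximum-length path forms a cycle, combined with Proposition~\ref{pro5}, forces $v_1 v_\ell \notin E(F)$, so $N(v_1), N(v_\ell) \subseteq \{v_2, \ldots, v_{\ell-1}\}$. By the common-neighbor hypothesis, $v_1$ and $v_\ell$ share at least one neighbor, which necessarily lies on $P$; this gives the existence of $v_c$.

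Set $r = \max\{i : v_1 v_i \in E(F)\}$ and $s = \min\{j : v_\ell v_j \in E(F)\}$. Any common neighbor of $v_1$ and $v_\ell$ lies in the index range $[s,r]$, so the existence of a common neighbor already forces $s \leq r$. The cycles $C_1 = v_1 v_2 \cdots v_r v_1$ of length $r$ and $C_2 = v_s v_{s+1} \cdots v_\ell v_s$ of length $\ell - s + 1$ must each obey the global cycle-length bound $\lceil n/2 \rceil - 1$. If $s < r$, the ``cross-cycle'' $C_3 = v_1 v_2 \cdots v_s v_\ell v_{\ell-1} \cdots v_r v_1$ uses two disjoint path segments and therefore has length $s + \ell - r + 1$. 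Summing the three cycle lengths telescopes to $|C_1| + |C_2| + |C_3| = 2\ell + 2$, so at least one of them has length $\geq (2\ell + 2)/3$.

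The main obstacle is to rule out the case $s < r$. I would combine the cycle-length bound $\lceil n/2 \rceil - 1$ applied to each of $C_1, C_2, C_3$ with the degree lower bounds $r - 1 \geq \lceil (n-1)/3 \rceil$ and $\ell - s \geq \lceil (n-1)/3 \rceil$, which say that the neighborhoods of $v_1$ and $v_\ell$ must crowd tightly into their allowed index intervals. When a direct contradiction does not emerge, I would pick an \emph{interior} common neighbor $v_c$ with $s \leq c \leq r$ and build the further cycle
\[
v_1 v_c v_{c-1} \cdots v_s v_\ell v_{\ell-1} \cdots v_r v_1
\]
of length $c - s + \ell - r + 3$; alternatively, if $\ell < n$, I would invoke the common-neighbor hypothesis on an off-path vertex $u$ to produce either a neighbor of $v_1$ beyond $v_r$ or an extension of $P$, contradicting maximality. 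Balancing these three cycle inequalities against the two degree inequalities is the delicate step.

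Once $s = r$ is forced, this common value is $c$, so $v_c$ is the unique common neighbor of $v_1$ and $v_\ell$ on $P$, and the containments $N(v_1) \subseteq \{v_2, \ldots, v_c\}$ and $N(v_\ell) \subseteq \{v_c, \ldots, v_{\ell-1}\}$ follow immediately from the definitions of $r$ and $s$.
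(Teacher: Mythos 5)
This proposition is quoted by the paper from \cite{r5} without proof, so your attempt has to stand on its own; it does not. The reduction at the two ends is fine: maximality puts $N(v_1),N(v_\ell)$ on the path, Proposition~\ref{pro5} (or directly the ``no maximum path forms a cycle'' hypothesis) gives $v_1v_\ell\notin E(F)$, and with $r=\max\{i:v_1v_i\in E\}$, $s=\min\{j:v_\ell v_j\in E\}$ the whole statement is correctly reduced to showing $s=r$. But the step that is supposed to rule out $s<r$ does not work, and you essentially concede this yourself (``when a direct contradiction does not emerge\dots the delicate step''). The telescoping identity $|C_1|+|C_2|+|C_3|=2\ell+2$ only guarantees some cycle of length at least $(2\ell+2)/3$, and nothing forces $\ell$ to be large enough for this to exceed $\lceil n/2\rceil-1$; from your hypotheses one only gets $\ell\gtrsim 2n/3$ at best, so $(2\ell+2)/3\approx 4n/9<n/2$. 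Worse, the full system of constraints you propose to ``balance'' is genuinely feasible: for $n=30$ one has $L=\lceil n/2\rceil-1=14$, $D=\lceil (n-1)/3\rceil=10$, and the choice $\ell=17$, $s=7$, $r=14$ satisfies $r\le L$, $\ell-s+1\le L$, $s+\ell-r+1=11\le L$, $r\ge D+1$, $\ell-s\ge D$ with $s<r$. So no amount of balancing those five inequalities can close the gap.

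The missing ingredient is the rotation (crossing) argument, which is the standard way the ``no maximum length path forms a cycle'' hypothesis is exploited: if $v_\ell v_i\in E$ and $v_1v_{i+1}\in E$ for some $i$, then $v_1v_{i+1}v_{i+2}\cdots v_\ell v_i v_{i-1}\cdots v_1$ is a cycle through all $\ell$ vertices of the path, and any Hamiltonian path of that cycle is a maximum-length path forming a cycle --- a contradiction. Hence the sets $\{i:v_1v_{i+1}\in E\}$ and $\{i:v_\ell v_i\in E\}$ are disjoint subsets of $\{1,\dots,\ell-1\}$, which both bounds $\ell$ from below by $\deg(v_1)+\deg(v_\ell)+1$ and, more importantly, severely restricts how the two neighborhoods can interleave when $s<r$. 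Your proposal uses the no-cycle-path hypothesis only to exclude the single edge $v_1v_\ell$, and without the rotation argument (combined with the common-neighbor condition applied to further pairs, e.g.\ endpoints of the rotated maximum paths) the case $s<r$ cannot be eliminated. As written, the proof has a genuine gap at its central step.
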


\section{Proof of Theorem \ref{theorem1}}\label{Sec3}

In this section, we consider the edit distance function for the hereditary property that forbids $C_{2n}^{*}$ where $n$ is even and prove that $ed_{Forb(C_8^*)}(p)=\gamma_{Forb(C_8^*)}(p)$ for all $p \in [0, 1]$.

First, we obtain the value of $\gamma_{Forb(C_{2n}^*)}(p)$ for $p\in [0,1]$ and restrict $ed_{Forb(C_{2n}^*)}(p)$ to $p\in [0, 1/2)$ and CRGs $K$ with only black vertices. Finally, we determine the edit distance $ed_{Forb(C_8^*)}(p)=\gamma_{Forb(C_8^*)}(p)$ and then prove Theorem \ref{theorem1}.

\begin{lemma}\label{lemma5}
Let $\mathscr{H}=Forb(C_{2n}^*)$, $p\in[0, 1]$ and $n\geq4$ is even.

$$\gamma_{\mathscr{H}}(p)= \min\left\{\frac{p}{2},\ \frac{p(1-p)}{1+(\lceil\frac{2n-1}{3}\rceil-2)p},\ \frac{1-p}{n-1}\right\}.$$
Furthermore, if there is a $p$-core CRG $K\in K(Forb(C_{2n}^*))$ such that $g_K(p) <\gamma_{Forb(C_{2n}^*)}(p)$ for any $p\in[0, 1]$, then $p < 1/2$ and K has all black vertices.
\end{lemma}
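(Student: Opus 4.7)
The plan is to prove the lemma in two parts: first I will compute $\gamma_{\mathscr{H}}(p)$ by identifying the extremal pairs of the clique spectrum, then I will establish the ``furthermore'' clause ruling out $p$-core CRGs outside the prescribed regime.

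For the first part, recall that $\gamma_{\mathscr{H}}(p) = \min_{(r,s) \in \Gamma} \frac{p(1-p)}{r(1-p)+sp}$, that $\Gamma(Forb(C_{2n}^*))$ is down-closed, and that the expression $\frac{p(1-p)}{r(1-p)+sp}$ is strictly decreasing in both $r$ and $s$; hence the minimum is attained at the ``upper boundary'' of $\Gamma$. I would verify three pairs on this boundary. The pair $(2,0)$ lies in $\Gamma$ since $C_{2n}^*$ contains the triangle $v_0v_1v_2$ formed by the diagonal together with two adjacent cycle edges, so $\chi(C_{2n}^*) \geq 3$ and $V(C_{2n}^*)$ is not coverable by two independent sets. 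The pair $(0, n-1)$ lies in $\Gamma$ since the maximum clique size in $C_{2n}^*$ is three and there is only one triangle, so $n-1$ cliques cover at most $3 + 2(n-2) = 2n-1 < 2n$ vertices. The pair $(1, \lceil(2n-1)/3\rceil - 1)$ requires showing that the minimum number of cliques needed to cover $V(C_{2n}^*)$ after removing an independent set equals $\lceil(2n-1)/3\rceil$; I would produce a matching upper construction by selecting an independent set spaced by three around the cycle (starting past the triangle) that leaves the triangle plus disjoint cycle-edges, and establish the lower bound by case analysis on whether the removed independent set intersects $\{v_0, v_1, v_2\}$, bounding the clique cover of the resulting union of paths (possibly retaining the diagonal). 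Since $\chi(C_{2n}^*) = 3$, no pair $(r, 0)$ with $r \geq 3$ lies in $\Gamma$, so the upper boundary consists of exactly these three pairs and the minimum is as claimed.

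For the furthermore clause, suppose $K$ is a $p$-core CRG in $\mathscr{K}(Forb(C_{2n}^*))$ with $g_K(p) < \gamma_{\mathscr{H}}(p)$. Proposition \ref{pro1} splits the analysis at $p = 1/2$. In the case $p \geq 1/2$, Proposition \ref{pro1}(b) forces $K$ to have no white edges and black edges only between white vertices, and Proposition \ref{pro2}(b) pins $\mathbf{x}(u) = g_K(p)/(1-p)$ for each black vertex $u$; hence if $K$ has $b$ black vertices the total black weight $b\, g_K(p)/(1-p) \leq 1$ gives $g_K(p) \geq (1-p)/b$. The non-embedding condition $C_{2n}^* \not\mapsto K$ can then be shown to force $b \leq n-1$ by an embedding argument that threads the cycle through the black vertices and identifies the diagonal endpoints, yielding $g_K(p) \geq (1-p)/(n-1) \geq \gamma_{\mathscr{H}}(p)$, a contradiction. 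In the case $p < 1/2$ with $K$ containing a white vertex $w$, Proposition \ref{pro2}(a) gives $\mathbf{x}(w) = g_K(p)/p$, and the sub-CRG $K' := K - w$ still lies in $\mathscr{K}(Forb(C_{2n}^*))$; the $p$-core property forces $g_{K'}(p) > g_K(p)$, so an inductive argument on $|V(K)|$ that repeatedly strips off white vertices, combined with the weight bound, reduces to the all-black case.

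The main obstacle will be the exact computation $c(1) = \lceil(2n-1)/3\rceil$: the construction is direct, but the matching lower bound needs a careful combinatorial case analysis distinguishing whether the independent set meets the triangle, since missing the triangle forces different path-structures on the remainder and allows the chord to survive. A secondary difficulty is the $p \geq 1/2$ side of the furthermore clause, since $Forb(C_{2n}^*)$ is not self-complementary, so the usual $p \leftrightarrow 1-p$ symmetry does not formally reduce this case to $p \leq 1/2$ and the embedding argument bounding $b$ must be carried out directly.
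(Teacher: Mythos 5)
Your proposal has genuine gaps, the first of which is that you have the wrong graph. In this paper the ``diagonal'' of $C_{2n}^*$ is the antipodal chord $0\sim n$ (the proof of the lemma explicitly lists the edges $i\sim i+1$, $2n-1\sim 0$ and $0\sim n$; the cycle-plus-short-chord graph with the triangle $v_0v_1v_2$ is the separate graph $\widetilde{C_n}$ of Theorem~\ref{theorem5}). Consequently $C_{2n}^*$ is triangle-free; the reason $(2,0)\in\Gamma$ is that the chord creates two odd $(n+1)$-cycles when $n$ is even --- this is exactly why the hypothesis ``$n$ even'' appears, a hypothesis your triangle argument makes vacuous. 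Your spectrum computation happens to land on the same three pairs, but the verifications (``only one triangle'', ``independent set starting past the triangle'') do not apply to the actual graph, and in any case verifying $(1,\lceil\frac{2n-1}{3}\rceil-1)\in\Gamma$ for gray-edge CRGs $K(r,s)$ is not enough for the furthermore clause: there one must bound the black vertices of a $p$-core CRG whose black--black edges may be \emph{white}. When $3\nmid n$ the paper's deleted set leaves only cliques of size $1$ or $2$, so the edge colours between black vertices are irrelevant; when $3\mid n$ one component of $C_{2n}^*-S$ is a path on four vertices, and the paper must first prove (via Proposition~\ref{pro2}(a), using $g_K(p)<p/2$) that some two black vertices are joined by a \emph{gray} edge before the embedding can be completed. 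Nothing in your plan addresses this.

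The two halves of your furthermore argument also fail as sketched. For $p\geq 1/2$: from $\mathbf{x}(u)=g_K(p)/(1-p)$ and total weight at most $1$ you get $b\,g_K(p)/(1-p)\leq 1$, i.e.\ $g_K(p)\leq (1-p)/b$ --- an \emph{upper} bound, not the lower bound you assert; to lower-bound $g_K$ you must also control the white vertices, and for $p>1/2$ white--white edges may be black, so the chromatic-number argument does not cap their number (CRGs with many white vertices joined by black edges lie in $\mathscr{K}(Forb(C_{2n}^*))$). The paper avoids this entirely: $ed_{\mathscr{H}}(1/2)=\gamma_{\mathscr{H}}(1/2)$ holds for every hereditary property, $\gamma_{\mathscr{H}}(1)=0$, $\gamma_{\mathscr{H}}$ is linear (namely $\frac{1-p}{n-1}$) on $[1/2,1]$, and $ed_{\mathscr{H}}$ is continuous, concave and $\leq\gamma_{\mathscr{H}}$, forcing $ed_{\mathscr{H}}=\gamma_{\mathscr{H}}$ on $[1/2,1]$, so no $K$ with $g_K(p)<\gamma_{\mathscr{H}}(p)$ can exist there. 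For $p<1/2$ with a white vertex, your stripping induction is logically inverted: deleting $w$ gives $g_{K-w}(p)>g_K(p)$, so any lower bound you prove for $K-w$ says nothing about $g_K$, and the intended ``base case'' (all-black CRGs satisfy $g_K\geq\gamma$) is precisely what this lemma does \emph{not} claim --- ruling out all-black CRGs is the content of Theorem~\ref{theorem1}, done later via Proposition~\ref{pro3}. The correct route, as in the paper, is to cap the counts directly: at most two white vertices (and at most one if a black vertex is present) because $C_{2n}^*\mapsto K(3,0)$ and $C_{2n}^*\mapsto K(2,1)$, at most $\lceil\frac{2n-1}{3}\rceil-1$ black vertices by the embedding described above, and then $g_K(p)\geq\frac{p(1-p)}{(1-p)+bp}\geq\gamma_{\mathscr{H}}(p)$.
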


\begin{proof}
If $n$ is even, the extreme points of the clique spectrum of $Forb(C_{2n}^*)$ are $(2, 0)$, $\left(1,\left\lceil\frac{2n-1}{3}\right\rceil-1\right)$ and $(0,\ n-1)$. Then $\gamma_{\mathscr{H}}(p)= \min\left\{\frac{p}{2}, \frac{p(1-p)}{1+(\left\lceil\frac{2n-1}{3}\right\rceil-2)p}, \frac{1-p}{n-1}\right\}$.

Since $ed_\mathscr{H}(1/2)=\gamma_\mathscr{H}(1/2)$ for any hereditary property and $\gamma_\mathscr{H}(1)=0$, we may use continuity and concavity to conclude that $ed_\mathscr{H}(p)=\gamma_\mathscr{H}(p)=\frac{1-p}{n-1}$ for $p\in [1/2, 1]$.
Now we suppose $p\in [0, 1/2)$ and $K$ is a $p$-core CRG such that $g_K(p)<\gamma_\mathscr{H}(p)$.

If $K$ has only white vertices, then $|V(K)|\leq2$ and $g_K(p)\geq\frac{p}{2}\geq\gamma_\mathscr{H}(p)$ since $C_{2n}^{*}\mapsto K(3, 0)$. If $K$ has both white and black vertices, then it has 1 white vertex $\omega$ since $C_{2n}^*\mapsto K(2,1)$. Furthermore, it can have at most $ \left\lceil\frac{2n-1}{3}\right\rceil-1$ black vertices.

To see this, denote the vertices of $C_{2n}^*$ by $\{0,\ldots,2n-1\}$ where $i\sim i+1$ for $0\leq i\leq 2n-2$, $2n-1\sim 0$ and $0\sim n$. If $n$ is not divisible by 3, then let $S$ consist of the members of $\{0, 1, \ldots, 2n-1\}$ that are divisible by 3. The graph $C_{2n}^*-S$ has $\left\lceil\frac{2n-1}{3}\right\rceil$ connected components, each of which are cliques of size 1 or 2. If $n$ is divisible by 3, then let $S=\{i:i\in\{0, 1, \ldots, 2n-1\}, i-1$ is divisible by 3\}. The graph $C_{2n}^*-S$ has $\left\lceil\frac{2n-1}{3}\right\rceil$ connected components, each of which are cliques of size 2 except three edges $n-1\sim n$, $n\sim0$ and $0\sim 2n-1$.

If $d_G(v_i)=\mathbf{x}(\omega)$ for any $v_i\in VB(K)$, then by Proposition \ref{pro2}$(a)$, we have $\frac{g_K(p)}{p}=\frac{p-g_K(p)}{p}+\frac{1-2p}{p}\mathbf{x}(v_i)>\frac{p-g_K(p)}{p}$.
Rearranging the terms, we obtain $g_K(p)> \frac{p}{2}\geq \gamma_\mathscr{H}(p)$, a contradiction.
So, there are two black vertices $v_1, v_2$ in $K$ such that $v_1v_2\in EG(K)$. Let $v_1$ receive $n-1\sim n$ and $v_2$ receive $0\sim 2n-1$, then $C_{2n}^{*}\mapsto K$.
Thus, regardless of whether the edges are white or gray, there are at most $\lceil\frac{2n-1}{3}\rceil-1$ black vertices in $K$ and $g_K(p)\geq\frac{p(1-p)}{1+(\lceil\frac{2n-1}{3}\rceil-2)p}\geq\gamma_\mathscr{H}(p)$.

So, if $p\in [0, 1/2)$ and $g_K(p)=ed_{Forb(C_{2n}^{*})}(p)$, then $K$ is either $K(2, 0)$, $K(1,\left\lceil\frac{2n-1}{3}\right\rceil-1)$, $K(0, n-1)$ or $K$ has all black vertices (and white or gray edges).

\end{proof}

\begin{proof}[\textbf{Proof of Theorem \ref{theorem1}}]
Now, we calculate $ed_{\mathscr{H}}(p)$ where $\mathscr{H}=Forb(C_8^*)$. By Lemma \ref{lemma5}, we know $\gamma_{\mathscr{H}}(p)= \min\left\{\frac{p}{2}, \frac{1-p}{3}, \frac{p(1-p)}{1+p}\right\}$ and only need consider the $p$-core CRGs $K$ with only black vertices for some $p\in [0, 1/2)$.

If $K$ has only black vertices, then $K$ has no gray $C_4^+$ otherwise $C_8^*\mapsto K$. By Proposition {\ref{pro3}}, we know either $g_K(p)> p/2\geq \gamma_\mathscr{H}(p)$ or $g_K(p)\geq \min\left\{2p/3, (1-p)/3\right\}> \gamma_\mathscr{H}(p)$. By straightforward calculations, this contradicts to $g_K(p)<\gamma_\mathscr{H}(p)$ for all $p\in [0, 1/2)$.

\end{proof}

\section{Proof of Theorem \ref{theorem5}}\label{Sec5}
In this section, we consider the edit distance function for hereditary property that forbids $\widetilde{C_n}$. Let $\mathscr{H}=Forb(\widetilde{C_n})$. First, we obtain the value of $\gamma_{\mathscr{H}}(p)$ for $p\in [0,1]$. Then we suppose there is a $p$-core CRG $K\in \mathscr{K}(Forb(\widetilde{C_n}))$ such that $g_K(p)< \gamma_{\mathscr{H}}(p)$ and establish some characterizations of such a $p$-core CRG $K$. Finally, we obtain a contradiction to such a CRG existing in $\mathscr{K}(Forb(\widetilde{C_n}))$ for our desired range of $p$ values, establishing $\gamma_{\mathscr{H}}(p)\leq ed_{\mathscr{H}}(p)$.

\begin{lemma}\label{lemma1}
Let $\mathscr{H}=Forb(\widetilde{C_n})$, and $n\geq6$ then
$$\gamma_{\mathscr{H}}(p)= \min\left\{\frac{p}{2}, \ \frac{p(1-p)}{1+(\lceil\frac{n-1}{3}\rceil-2)p},\ \frac{1-p}{\left\lceil\frac{n-3}{2}\right\rceil}\right\},\ \text{for}\ p\in[0, 1].$$
Furthermore, if there is a $p$-core CRG $K\in\mathscr{K}(\mathscr{H})$ such that $g_K(p)<\gamma_\mathscr{H}(p)$ for any $p\in[0, 1]$, then $p < \frac{1}{2}$ and $K$ has all black vertices.
\end{lemma}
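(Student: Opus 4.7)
The plan mirrors the three-step structure of Lemma~\ref{lemma5}. First, I determine the extreme points of the clique spectrum $\Gamma(\mathscr{H})$. Since $\widetilde{C_n}$ contains the triangle $v_0 v_1 v_2$, we have $\chi(\widetilde{C_n}) \geq 3$, and a direct $3$-coloring (three distinct colors on the triangle, then alternate along the remaining path) yields $\chi(\widetilde{C_n}) = 3$, hence the extreme point $(2,0)$. The minimum clique cover partitions $V(\widetilde{C_n})$ into the triangle together with $\lceil(n-3)/2\rceil$ edges/singletons along the path $v_3 \ldots v_{n-1}$, yielding the extreme point $(0, \lceil(n-3)/2\rceil)$. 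For the mixed extreme point $(1, \lceil(n-1)/3\rceil - 1)$, I verify $\widetilde{C_n} \mapsto K(1, \lceil(n-1)/3\rceil)$ by placing the triangle at one black vertex and invoking Peck's result \cite{r8} on $P_{n-3} \mapsto K(1, \lceil(n-4)/3\rceil)$; the reverse non-embedding at $s = \lceil(n-1)/3\rceil - 1$ reduces to a case analysis on whether the triangle is absorbed inside a single clique of any putative partition, controlled by the bound $\alpha(\widetilde{C_n}) = \lfloor n/2 \rfloor$ on the independent-set side.

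Second, for $p \in [1/2, 1]$, I combine Proposition~\ref{pro7} with the general identity $ed_\mathscr{H}(1/2) = \gamma_\mathscr{H}(1/2)$, the boundary value $\gamma_\mathscr{H}(1) = 0$, and the continuity and concavity of $ed_\mathscr{H}$ from Proposition~\ref{pro8}, which together force $ed_\mathscr{H}(p) = \gamma_\mathscr{H}(p) = (1-p)/\lceil(n-3)/2\rceil$ on this interval.

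Third, for $p \in [0, 1/2)$, suppose $K$ is a $p$-core CRG with $g_K(p) < \gamma_\mathscr{H}(p)$. By Proposition~\ref{pro1}(a), $K$ has no black edges and its white edges are incident only to black vertices. If $K$ has only white vertices, then $\widetilde{C_n} \mapsto K(3,0)$ (via $\chi(\widetilde{C_n})=3$) forces $|V(K)| \leq 2$, so $g_K(p) \geq p/2 \geq \gamma_\mathscr{H}(p)$, a contradiction. If $K$ has both colors, then $\widetilde{C_n} \mapsto K(2,1)$ (color with the triangle as the clique class and $2$-color the rest) forces $|VW(K)|=1$; call the unique white vertex $\omega$. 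Should every black vertex $v$ satisfy $d_G(v) = \mathbf{x}(\omega)$, Proposition~\ref{pro2}(a) rearranges to $g_K(p) = p/2 + \tfrac{1-2p}{2}\mathbf{x}(v) \geq p/2 \geq \gamma_\mathscr{H}(p)$, a contradiction. Otherwise two black vertices $v_1, v_2$ are joined by a gray edge, and using this gray edge to absorb non-adjacent vertex pairs of $\widetilde{C_n}$ (in the spirit of the $\{n-1,n\}$ and $\{0, 2n-1\}$ placement of Lemma~\ref{lemma5}), I construct an embedding $\widetilde{C_n} \mapsto K$ whenever $|VB(K)| \geq \lceil(n-1)/3\rceil$, contradicting $K \in \mathscr{K}(\mathscr{H})$. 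Hence $|VB(K)| \leq \lceil(n-1)/3\rceil - 1$, and monotonicity of $g_{K(1,s)}$ in $s$ yields $g_K(p) \geq p(1-p)/(1+(\lceil(n-1)/3\rceil-2)p) \geq \gamma_\mathscr{H}(p)$, a contradiction. The only surviving configuration is $VW(K) = \emptyset$.

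The principal obstacle will be the explicit embedding step of the third paragraph: since $\widetilde{C_n}$ carries the triangle $v_0 v_1 v_2$, I must decide how this triangle and its surrounding path $v_3 \ldots v_{n-1}$ split across the two gray-joined black vertices $v_1, v_2$ and the remaining $\lceil(n-1)/3\rceil - 2$ black vertices, so that cross non-edges land on the gray edge and each clique on one black vertex. This construction should be carried out in subcases depending on the residue of $n-1$ modulo $3$, paralleling the two-subcase split ($n$ divisible by $3$ or not) that appears in the proof of Lemma~\ref{lemma5}.
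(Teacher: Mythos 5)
Your outline follows the paper's three-step structure (extreme points of the clique spectrum; the interval $[1/2,1]$ by continuity and concavity; the white-vertex case analysis for $p<1/2$), but at the step you flag as the principal obstacle you take a genuinely different and harder route than the paper, and the detour is unnecessary. You import from Lemma~\ref{lemma5} the dichotomy ``either every black vertex $v$ has $d_G(v)=\mathbf{x}(\omega)$, whence $g_K(p)\ge p/2$, or two black vertices are joined by a gray edge,'' and then plan to thread part of $\widetilde{C_n}$ across that gray edge. That machinery is needed for $C_{2n}^*$ with $3\mid n$ because the natural partition there leaves a component that is not a clique; for $\widetilde{C_n}$ it is not needed. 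Taking $S$ to be the vertices of $\{3,\dots,n-1\}$ divisible by $3$, together with $0$ whenever $3\nmid (n-1)$, the set $S$ is independent and $\widetilde{C_n}-S$ has exactly $\lceil\frac{n-1}{3}\rceil$ components, each of which is a clique (the triangle $v_0v_1v_2$ is either a whole component or loses $v_0$ to $S$). Mapping $S$ to the white vertex and each component to a distinct black vertex embeds $\widetilde{C_n}$ into \emph{any} $p$-core CRG with one white and $\lceil\frac{n-1}{3}\rceil$ black vertices, regardless of whether the black--black edges are white or gray: distinct components span no edges of $\widetilde{C_n}$, and the white--black edges are forced gray by Proposition~\ref{pro1}$(a)$. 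Hence $|VB(K)|\le\lceil\frac{n-1}{3}\rceil-1$ with no gray-edge argument and no residue subcases. Your sketched decomposition (triangle on one black vertex plus Peck's partition of the remaining path) only works if you also arrange that no edge of $\widetilde{C_n}$ --- in particular $v_0v_{n-1}$ --- runs between two clique parts, which is exactly what the choice of $S$ above accomplishes; so your plan can be repaired, but only by collapsing it into the paper's construction. One further caution on the spectrum itself: the bound $\alpha(\widetilde{C_n})=\lfloor n/2\rfloor$ alone does not certify $(1,\lceil\frac{n-1}{3}\rceil-1)\in\Gamma(\mathscr{H})$ (for $n=9$ it allows $|I|=4$ with two cliques covering the remaining five vertices), so the non-embedding direction really does require the structural case analysis you allude to rather than a counting argument.
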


\begin{proof}
The extreme points of the clique spectrum of $Forb(\widetilde{C_n})$ are (2, 0), $\left(1, \left\lceil\frac{n-1}{3}\right\rceil-1\right)$ and $(0, \left\lceil\frac{n-3}{2}\right\rceil)$, and which establishes the value of $\gamma_\mathscr{H}(p)$.

Since $ed_\mathscr{H}(1/2)=\gamma_\mathscr{H}(1/2)$ for any hereditary property and $\gamma_\mathscr{H}(1)=0$, we may use continuity and concavity to conclude that $ed_\mathscr{H}(p)=\frac{1-p}{\left\lceil\frac{n-3}{2}\right\rceil}$ for $p\in [1/2, 1]$.

Now, let $p\in[0, 1/2)$ and $K$ be a $p$-core CRG such that $\widetilde{C_n}\not\mapsto K$. If $K$ has at most two vertices, then $g_K(p)\geq \frac{p}{2}$ since $\widetilde{C_n}\mapsto K(3, 0)$.
If $K$ has both white and black vertices, then it has at most one white vertex since $\widetilde{C_n} \mapsto K(2,1)$. Furthermore, it can have at most $\lceil\frac{n-1}{3}\rceil-1$ black vertices.

To see this, denote the vertices of $\widetilde{C_n}$ by $\{0,1,\ldots,n-1\}$ where $i\sim i+1$ for $0\leq i\leq n-2$, $n-1\sim0$ and $0\sim 2$. Let $S$ consist of the members of $\{3,\ldots,n-1\}$ that are divisible by 3. If $n-1$ is not divisible by 3, then add $0$ to $S$. The graph $\widetilde{C_n}-S$ has $\lceil\frac{n-1}{3}\rceil$ connected components, each of which are cliques of size 1 or 2 or 3. Thus, regardless of whether the edges are white or gray, there are at most $\lceil\frac{n-1}{3}\rceil-1$ black vertices in $K$ and $g_K(p)\geq\frac{p(1-p)}{1+(\lceil\frac{n-1}{3}\rceil-2)p}$, with equality if and only if $K\cong K(1, \lceil\frac{n-1}{3}\rceil-1)$.

Summarizing, if $p\in[0,1/2)$ and $g_K(p)=ed_\mathscr{H}(p)$, then $K$ is either $K(2,0)$, $K(1, \lceil\frac{n-1}{3}\rceil-1)$, $K(0, \lceil\frac{n-3}{2}\rceil)$, or $K$ has all black vertices (and white or gray edges).

\end{proof}

We only need to consider the $K\in \mathscr{K}(Forb(\widetilde{C_n}))$ with all black vertices such that $g_K(p)< \gamma_{Forb(\widetilde{C_n})}(p)$. Now, we establish some characterizations of such a $p$-core CRG $K$.

\begin{prop}\label{pro10}
Let $p\in [0, 1/2)$ and $K$ be a p-core CRG such that $K$ has only black vertices and white and gray edges. If $\widetilde{C_n} \not\mapsto K$ then $K$ has no gray cycle of length $l\in\{\left\lceil\frac{n-1}{2}\right\rceil, \ldots, n-1\}$.
\end{prop}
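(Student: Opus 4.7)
The plan is to prove the contrapositive: assuming $K$ contains a gray cycle $C = u_0 u_1 \cdots u_{l-1}$ of length $l$ with $\lceil (n-1)/2 \rceil \leq l \leq n-1$, I will construct an embedding $\varphi : V(\widetilde{C_n}) \to V(K)$ witnessing $\widetilde{C_n} \mapsto K$, contradicting $K \in \mathscr{K}(Forb(\widetilde{C_n}))$. Because every vertex of $K$ is black and every edge of $K$ is white or gray, the embedding rules simplify drastically: each edge of $\widetilde{C_n}$ must be sent either to a gray edge of $K$ or collapsed onto a single (black) vertex, while each non-edge simply has to be sent to two distinct vertices of $K$ (the connecting edge being automatically white or gray).

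The construction will partition the cyclic sequence $v_0, v_1, \ldots, v_{n-1}$ into $l$ cyclically consecutive arcs $A_0, A_1, \ldots, A_{l-1}$ and set $\varphi(v) = u_j$ for every $v \in A_j$. For this to define an embedding, each arc must induce a clique in $\widetilde{C_n}$; since the only triangle on three cyclically consecutive vertices of $\widetilde{C_n}$ is $\{v_0, v_1, v_2\}$, each arc may have size $1$ or $2$, and size $3$ is permitted only when the arc equals $\{v_0, v_1, v_2\}$. Additionally, the chord $v_0 v_2$ forces $v_0$ and $v_2$ to lie either in the same arc or in two consecutive arcs, so that its image is either a collapsed pair or a gray edge of $C$. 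For $\lceil (n-1)/2 \rceil \leq l \leq n-2$ I set $A_0 = \{v_0, v_1, v_2\}$ and distribute $v_3, \ldots, v_{n-1}$ into the remaining $l-1$ arcs of sizes $1$ or $2$; the inequalities $l - 1 \leq n - 3 \leq 2(l-1)$ guarantee this is feasible. For the extremal case $l = n-1$, a size-$3$ arc cannot be afforded, so I set $A_0 = \{v_0, v_1\}$, $A_1 = \{v_2\}$, and let $v_3, \ldots, v_{n-1}$ each form a singleton arc; the chord $v_0 v_2$ then maps to the gray edge $u_0 u_1$ of $C$.

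Once the partition is fixed, verification is routine: edges internal to an arc collapse onto a black vertex (permitted since every such arc is either a pair of cycle-adjacent vertices or the triangle $\{v_0, v_1, v_2\}$), edges between consecutive arcs map to gray edges of $C$ (including the closing edge $v_{n-1} v_0 \mapsto u_{l-1} u_0$), and non-edges map to distinct vertices of $K$. The main obstacle lies in the partition arithmetic and in isolating the extremal case $l = n-1$: for every other value in the stated range the triangle-collapse trick handles the chord automatically, but for $l = n-1$ one must instead place $v_0$ and $v_2$ in adjacent arcs so that the chord $v_0 v_2$ lands on a gray edge of $C$ rather than on a collapsed pair.
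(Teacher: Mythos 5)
Your proposal is correct and uses essentially the same idea as the paper: partition $\widetilde{C_n}$ into $l$ cyclically consecutive parts, each inducing a clique, and map consecutive parts to consecutive vertices of the gray $l$-cycle, using that all vertices of $K$ are black and all edges white or gray. In fact your treatment is slightly more careful than the paper's, which asserts that a partition with one triangle part $\{v_0,v_1,v_2\}$ and $l-1$ further nonempty parts always exists for $\lceil\frac{n-1}{2}\rceil\leq l\leq n-1$; for $l=n-1$ this is impossible (only $n-3$ vertices remain for $n-2$ parts), and your separate construction placing $v_0,v_2$ in consecutive arcs so the chord lands on a gray edge repairs exactly this extremal case.
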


\begin{proof}
Suppose $K$ has some gray cycle of length $l\in\{\left\lceil\frac{n-1}{2}\right\rceil, \ldots, n-1\}$. Partition the vertices of $\widetilde{C_n}$ into $l$ parts so that one part is the triangle and each of the others parts is either a set of two consecutive vertices (an edge) or single vertex. Because of the structure of $\widetilde{C_n}$ and the fact that $\lceil\frac{n-1}{2}\rceil\leq l\leq n-1$, it is always possible to do so. This partition witnesses an embedding of $\widetilde{C_n}$ into the $l$-cycle of $K$ because we can map consecutive parts to consecutive vertices on the $l$-cycle. Since non-consecutive parts do not have edges between them and Proposition \ref{pro1}$(a)$ gives that the edges of $K$ are either white or gray, this map is an embedding that demonstrates $\widetilde{C_n}\mapsto K$, a contradiction.

\end{proof}

\begin{prop}\label{pro4}
Let $p\in\left[\frac{1}{\left\lceil\frac{n-1}{3}\right\rceil}, \frac{1}{2}\right)$, and $K$ be a $p$-core CRG with all black vertices such that $g_K(p)< \gamma_{Forb(\widetilde{C_n})(p)}$. Then:

$(a)$ for every $v\in V(K)$, $deg_G(v)\geq\left\lceil\frac{n-1}{3}\right\rceil$, and

$(b)$ for every $v,w\in V(K)$, $deg_G(v,w)\geq 1$.
\end{prop}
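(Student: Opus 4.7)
The plan is to prove both parts by contradiction, with Proposition \ref{pro2}(a) as the workhorse. Since $K$ has only black vertices, that proposition supplies, for each $v \in V(K)$, the weight bound $\mathbf{x}(v) \leq g_K(p)/(1-p)$ and the identity $d_G(v) = (p - g_K(p))/p + ((1-2p)/p)\mathbf{x}(v)$. Because $p < 1/2$ and $\mathbf{x}(v) \geq 0$, the latter immediately yields the lower bound $d_G(v) \geq (p - g_K(p))/p$ that drives both arguments.

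For part (a), I would suppose toward a contradiction that some vertex $v$ satisfies $deg_G(v) \leq \lceil (n-1)/3 \rceil - 1$. Bounding each gray neighbor's weight by $g_K(p)/(1-p)$ gives $d_G(v) \leq (\lceil (n-1)/3 \rceil - 1)\, g_K(p)/(1-p)$. Matching this upper estimate against the lower estimate $d_G(v) \geq (p - g_K(p))/p$ and rearranging produces $g_K(p) \geq p(1-p)/[1 + (\lceil (n-1)/3 \rceil - 2)p]$, which by Lemma \ref{lemma1} is at least $\gamma_{\mathscr{H}}(p)$, contradicting the assumption $g_K(p) < \gamma_{\mathscr{H}}(p)$.

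For part (b), I would assume $v$ and $w$ have no common gray neighbor, so $N_G(v) \setminus \{w\}$ and $N_G(w) \setminus \{v\}$ are disjoint subsets of $V(K) \setminus \{v,w\}$. From $\sum_u \mathbf{x}(u) = 1$, isolating $\mathbf{x}(v)$, $\mathbf{x}(w)$, and the weights of these two sets gives
\[
1 \geq \mathbf{x}(v) + \mathbf{x}(w) + \mathbf{x}(N_G(v) \setminus \{w\}) + \mathbf{x}(N_G(w) \setminus \{v\}).
\]
A short case split on whether $vw$ is gray or white reduces this to $1 \geq d_G(v) + d_G(w)$: in the gray case the $\mathbf{x}(v), \mathbf{x}(w)$ terms exactly replace the entries removed from $N_G(w), N_G(v)$, while in the white case they are simply discarded as nonnegative. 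Invoking $d_G(v), d_G(w) \geq (p - g_K(p))/p$ then forces $g_K(p) \geq p/2 \geq \gamma_{\mathscr{H}}(p)$, contradicting the hypothesis on $K$.

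The only nontrivial bookkeeping is the case split $vw \in EG(K)$ vs.\ $vw \in EW(K)$ in part (b), which is routine once one tracks which of $v, w$ lie in the gray neighborhoods of the other. The hypothesis $p \geq 1/\lceil (n-1)/3 \rceil$ is not directly invoked by either derivation; it merely pins down the regime in which the middle term of $\gamma_{\mathscr{H}}(p)$ is the binding minimum, so that the proposition can be applied nonvacuously in the sequel.
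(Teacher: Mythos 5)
Your proof is correct, and part (a) is essentially the paper's argument in contrapositive form: both combine the identity $d_G(v)=\frac{p-g_K(p)}{p}+\frac{1-2p}{p}\mathbf{x}(v)$ with the weight cap $\mathbf{x}(u)\leq g_K(p)/(1-p)$ from Proposition \ref{pro2}(a) to force $g_K(p)\geq \frac{p(1-p)}{1+(\lceil\frac{n-1}{3}\rceil-2)p}\geq\gamma_{\mathscr{H}}(p)$. Part (b) is where you genuinely diverge: the paper lower-bounds the weighted codegree via inclusion--exclusion, $d_G(v,w)\geq d_G(v)+d_G(w)-1\geq\frac{p-2g_K(p)}{p}$, divides by the maximum weight, and only then uses $p\geq\lceil\frac{n-1}{3}\rceil^{-1}$ to conclude $deg_G(v,w)>\lceil\frac{n-1}{3}\rceil-\frac{1}{p}\geq 0$; you instead assume the gray neighborhoods are disjoint, deduce $d_G(v)+d_G(w)\leq 1$ directly from the total weight being $1$, and contradict through the $p/2$ branch of $\gamma_{\mathscr{H}}$. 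Your route is the special case $d_G(v,w)=0$ of the paper's inequality, and it is slightly cleaner in that the hypothesis $p\geq\lceil\frac{n-1}{3}\rceil^{-1}$ really is not needed for (b) (your closing remark that the paper would not need it either is wrong as a guess about the paper, but immaterial to correctness); the paper's version buys the stronger quantitative bound $deg_G(v,w)>\lceil\frac{n-1}{3}\rceil-\frac{1}{p}$, though only the positivity is ever used. One tiny simplification: the case split on whether $vw$ is gray is unnecessary, since $N_G(v)$ and $N_G(w)$ being disjoint subsets of $V(K)$ already gives $d_G(v)+d_G(w)\leq 1$ regardless of whether $w\in N_G(v)$.
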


\begin{proof}
$(a)$ Let $v, w \in V(K)$. By using Proposition {\ref{pro2}}$(a)$,
\begin{align*}
deg_G(v)&\geq\left\lceil\frac{d_G(v)}{\max\{\mathbf{x}(w)\}}\right\rceil\geq \frac{\frac{p-g_K(p)}{p}+\frac{1-2p}{p}\mathbf{x}(v)}{\frac{g_K(p)}{1-p}}\\
&\geq \frac{(p-g_K(p))(1-p)}{pg_K(p)}=\frac{1-p}{g_K(p)}-\frac{1-p}{p}\\
&>\frac{(1-p)+(\lceil\frac{n-1}{3}\rceil-1)p}{p}-\frac{1-p}{p}\\
&=\left\lceil\frac{n-1}{3}\right\rceil-1.
\end{align*}

$(b)$ By the inclusion-exclusion principle, $d_G(v)+d_G(w)-d_G(v,w)\leq 1$, and by using Proposition \ref{pro2}$(a)$, we have
$d_G(v,w)\geq 2\frac{p-g_K(p)}{p}+\frac{1-2p}{p}(\mathbf{x}(v)+\mathbf{x}(w))-1\geq\frac{p-g_K(p)}{p}\geq \frac{p-2g_K(p)}{p}$ and for all $u \in V(K)$, $\mathbf{x}(u)\leq g_K(p)/(1-p)$. Therefore,
\begin{align*}
deg_G(v,w)&\geq\left\lceil\frac{d_G(v,w)}{\max\{\mathbf{x}(u)\}}\right\rceil\geq \left\lceil\frac{\frac{p-2g_K(p)}{p}}{\frac{g_K(p)}{1-p}}\right\rceil=\frac{1-p}{g_K(p)}-\frac{2(1-p)}{p}\\
&>\frac{(1-p)+(\lceil\frac{n-1}{3}\rceil-1)p}{p}-\frac{2(1-p)}{p}\\
&=\left\lceil\frac{n-1}{3}\right\rceil-\frac{1}{p}.
\end{align*}

Since $p\geq \frac{1}{\lceil\frac{n-1}{3}\rceil}$, we have $deg_G(v,w)\geq 1$.

\end{proof}

We consider the value of $ed_{Forb(\widetilde{C_n})}(p)$ from the perspective of the gray subgraphs of CRGs $K$. Let $F$ be a graph such that $V(F)=V(K)$ and $E(F)=EG(K)$, where $K\in \mathscr{K}(Forb(\widetilde{C_n}))$ is a $p$-core CRG with all black vertices such that $g_K(p)<\gamma_{Forb(\widetilde{C_n})}(p)$.
By Proposition \ref{pro4}, $F$ is a connected graph and each pair of vertices has at least one common neighbor.

\begin{prop}\label{pro11}
Let $n\geq9$ and $F$ be a graph with no cycle with length in $\left\{\left\lceil\frac{n-1}{2}\right\rceil, \ldots, n-1\right\}$ and every pair of vertices having at least one common neighbor. Then $F$ has no cycle of with length greater than $\left\lceil\frac{n-1}{2}\right\rceil-1$.
\end{prop}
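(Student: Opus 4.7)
The plan is to argue by contradiction. Assume $F$ contains a cycle of length at least $n$ and let $C = v_0 v_1 \cdots v_{L-1} v_0$ be a \emph{shortest} such cycle, so $L \geq n$. Write $a := \lceil (n-1)/2 \rceil$. My goal is to exhibit some cycle in $F$ whose length lies in $[a, L-1]$: any such cycle gives a contradiction, because a length in $[a, n-1]$ violates the hypothesis on $F$, while a length in $[n, L-1]$ violates the minimality of $L$.

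First I would choose an integer $m$ with $\max\{3, a-2\} \leq m \leq \min\{L-3, L-a+2\}$. A brief check using $n \geq 9$ and $L \geq n$ shows this range is nonempty. By hypothesis, $v_0$ and $v_m$ admit a common neighbor $u$, and I would split on whether $u$ lies on $C$.

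If $u \notin V(C)$, then $u$ together with each of the two arcs of $C$ between $v_0$ and $v_m$ produces a cycle, of lengths $m+2$ and $L-m+2$ respectively; the bounds on $m$ immediately put both in $[a, L-1]$, producing the desired contradiction. Otherwise $u = v_p$ for some $p \in \{1,\ldots,L-1\}\setminus\{0,m\}$, which I would split further. In the generic subcase $p \notin \{1, L-1, m-1, m+1\}$, both $v_0 v_p$ and $v_m v_p$ are proper chords; using only $v_0 v_p$ cuts $C$ into two cycles of lengths $p+1$ and $L-p+1$, summing to $L+2$, so the larger is at least $\lceil (L+2)/2\rceil \geq a$ (since $L \geq 2a-2$) and at most $L-1$ (since $2 \leq p \leq L-2$), hence lies in $[a, L-1]$. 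In each of the four boundary subcases $p \in \{1, L-1, m-1, m+1\}$, exactly one of the two putative chords is in fact an edge of $C$; the remaining genuine chord produces two cycles whose lengths are drawn from $\{m, m+2, L-m, L-m+2\}$, and the bounds on $m$ force $m+2$ or $L-m+2$ into $[a, L-1]$.

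The main technical obstacle, and the only genuine bookkeeping, is juggling the constraints on $m$: it must simultaneously avoid values that make $v_0 v_m$ an edge of $C$ (forcing $3 \leq m \leq L-3$), place both arc-cycles in $[a, L-1]$ in the off-cycle case (forcing $a-2 \leq m \leq L-a+2$), and still work for every possible boundary position of the on-cycle common neighbor. The assumption $n \geq 9$ is used precisely to guarantee that all of these intervals overlap and that $\lceil(L+2)/2\rceil \geq a$ holds whenever $L \geq n$.
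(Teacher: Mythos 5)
Your proof is correct and follows essentially the same strategy as the paper's: take a shortest cycle of length at least $n$, invoke the common-neighbor hypothesis for two vertices at a controlled distance $m$ along it, and show the resulting chord or external vertex creates a cycle whose length falls in the forbidden window $\left[\left\lceil\frac{n-1}{2}\right\rceil, L-1\right]$. Your write-up is in fact more complete than the paper's, which only treats a common neighbor lying on the cycle and elides the boundary positions of the chord that you check explicitly.
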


\begin{proof}
Let $v_1 \ldots v_\ell v_1$ be a shortest cycle in $F$ among all those with length greater than $n-1$. Consider the path $v_1 \ldots v_{\lceil\frac{n-1}{2}\rceil-1}$ on the cycle $v_1 \ldots v_\ell v_1$.

Assume $v_i$ is a common neighbor of $v_1$ and $v_{\lceil\frac{n-1}{2}\rceil-1}$, then either $v_1v_iv_{i+1}\ldots v_\ell v_1$ or $v_1 \ldots v_iv_{\lceil\frac{n-1}{2}\rceil-1}\ldots v_\ell v_1$ has length less than $\ell$. Without loss of generality, we assume  $v_1v_iv_{i+1}\ldots v_\ell v_1$ has length less than $\ell$, which implies
$$\left\lceil\frac{n-1}{2}\right\rceil-1\geq\ell-i+2\geq\ell-\left(\left\lceil\frac{n-1}{2}\right\rceil-2\right)+2\geq n-\left\lceil\frac{n-1}{2}\right\rceil+4.$$
Thus,
$$2\left\lceil\frac{n-1}{2}\right\rceil-1-n-4\geq 0,$$
a contradiction, since
$2\left\lceil\frac{n-1}{2}\right\rceil-1-n-4< 2(\frac{n-1}{2}+1)-1-n-4< 0$.

Therefore, $F$ has no cycle of with length greater than $\left\lceil\frac{n-1}{2}\right\rceil-1$.

\end{proof}

Then, we consider the maximum-length path in the graph $F$. If this path forms a cycle, then Proposition {\ref{pro5}} gives that $F$ must be Hamiltonian. By Proposition {\ref{pro11}}, $|V(K)|\leq \lceil\frac{n-1}{2}\rceil-1$ and $g_K(p)\geq \frac{1-p}{\lceil\frac{n-1}{2}\rceil-1}$, a contradiction. Thus, no maximum-length path in $F$ forms a cycle. By Proposition \ref{pro11}, $F$ has no cycle of with length greater than $\lceil\frac{n-1}{2}\rceil-1$. And, by Proposition \ref{pro4}, every vertex in $F$ has degree at least $\left\lceil\frac{n-1}{3}\right\rceil\geq2$ and every pair of vertices has at least one common neighbor.

Let $v_1 \ldots v_\ell$ be a maximum-length path in $F$ such that the sum $\mathbf{x}(v_1) + \mathbf{x}(v_\ell)$ is largest among all such paths. Then by Proposition {\ref{pro6}}, we have $v_1$ and $v_\ell$ have a unique common neighbor $v_c$ and $N(v_1)\subseteq\{v_2, \ldots, v_c\}$. Let $v_1$ have $d$ neighbors in $F$.
Since $v_1$ cannot have neighbors outside of this path, $d_G(v_1)\leq \mathbf{x}(v_2)+\cdots+\mathbf{x}(v_c)$. And if $v_i\in\{v_1, \ldots, v_{c-1}\}$ is a predecessor of a neighbor of $v_1$ in $F$, then it is an endpoint of a path containing the same $\ell$ vertices, namely $v_iv_{i-1}\ldots v_1v_{i+1}v_{i+2}\ldots v_c\ldots v_\ell$. Hence all $d$ predecessors of gray neighbors of $v_1$ (including $v_1$ itself) have weight at most $\mathbf{x}(v_1)$. By Proposition {\ref{pro2}}, $\frac{p-g_K(p)}{p}+\frac{1-p}{p}\mathbf{x}(v_1)=\mathbf{x}(v_1)+d_G(v_1)
\leq \mathbf{x}(v_1)+\cdots+\mathbf{x}(v_c)
\leq d\mathbf{x}(v_1)+(c-d)\frac{g}{1-p}$,
which implies
\begin{equation*}\label{e6}
g_K(p)\left(\frac{c-d}{1-p}+\frac{1}{p}\right)\geq 1-\mathbf{x}(v_1)\left(d-\frac{1-p}{p}\right).
\end{equation*}
By Propositions {\ref{pro10} and {\ref{pro4}},} we have $c\leq \lceil\frac{n-1}{2}\rceil-1$ and $d>\lceil\frac{n-1}{3}\rceil-1$. So when $p\geq\lceil\frac{n-1}{3}\rceil^{-1}$, by Proposition {\ref{pro2}$(a)$}, we have $\mathbf{x}(v)\leq g_K(p)/(1-p)$, hence
$$g_K(p)\geq\frac{1-p}{c}\geq\frac{1-p}{\lceil\frac{n-1}{2}\rceil-1}\geq\gamma_{\mathscr{H}}(p),$$ a contradiction. So $ed_\mathscr{H}(p)=\gamma_\mathscr{H}(p)$ for all $p\in \left[\frac{1}{\lceil\frac{n-1}{3}\rceil}, \frac{1}{2}\right)$.

Finally, $ed_{\mathscr{H}}(p)=\gamma_{\mathscr{H}}(p)=\frac{p}{2}$ for $p=\frac{1}{\left\lceil\frac{n-1}{3}\right\rceil}$, and $ed_{\mathscr{H}}(p)=\gamma_{\mathscr{H}}(p)=\frac{p}{2}$ for $p=0$. Then, since the function $\gamma_{\mathscr{H}}(p)$ is linear over this interval and $ed_{\mathscr{H}}(p)$ is continuous and concave down, we have $ed_{\mathscr{H}}(p)=\gamma_{\mathscr{H}}(p)$ for $p\in \left[0,\frac{1}{\left\lceil\frac{n-1}{3}\right\rceil}\right]$. Hence the two functions are equal for all $p\in[0, 1]$.

\section{Proof of Theorem \ref{theorem2}}\label{Sec4}
Similarly as Section \ref{Sec5}, but it also involves some crucial differences.
We first prove the following lemma.

\begin{lemma}\label{theorem3}
Let $\mathscr{H}=Forb(P_n)$ where $P_n$ denotes the path on $n\geq 3$ vertices.
$$\gamma_{\mathscr{H}}(p)= \min\left\{\frac{p(1-p)}{1+(\lceil\frac{n-1}{3}\rceil-2)p},\ \frac{1-p}{\lceil\frac{n}{2}\rceil-1}\right\},\ \text{for}\ p\in[0, 1].$$
Furthermore, if there is a $p$-core CRG $K\in\mathscr{K}(\mathscr{H})$ such that $g_K(p)<\gamma_\mathscr{H}(p)$ for any $p\in(0, 1)$, then $p< \frac{1}{2}$ and $K$ has all black vertices.
\end{lemma}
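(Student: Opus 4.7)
The plan is to follow the template of Lemmas \ref{lemma5} and \ref{lemma1}: first compute $\gamma_\mathscr{H}(p)$ via the clique spectrum of $Forb(P_n)$, then rule out $p$-core CRGs with white vertices. The crucial structural feature is that $P_n$ is bipartite, so $P_n \mapsto K(2, 0)$ via its $2$-coloring; hence no pair $(r,s)$ with $r \geq 2$ lies in $\Gamma(Forb(P_n))$, which is precisely why the formula omits the $p/2$ term present in Theorems \ref{theorem1} and \ref{theorem5}. The $r = 0$ extreme point is $(0, \lceil n/2 \rceil - 1)$, since the minimum clique cover of $V(P_n)$ is $n - \nu(P_n) = \lceil n/2 \rceil$. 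For $r = 1$, the partition $I = \{v_1, v_4, v_7, \ldots\}$ with the remaining vertices paired into edges (with a tail adjustment when $n \equiv 2 \pmod 3$) gives the upper bound $s \leq \lceil (n-1)/3 \rceil$, and the matching lower bound follows from block counting: if the partition produces $k = |I| + s$ consecutive blocks along $P_n$, then the no-two-consecutive-IS-blocks constraint gives $|I| \leq \lceil k/2 \rceil \leq s + 1$, hence $n = |I| + 2b + c \leq (s+1) + 2s = 3s+1$. This yields the extreme point $(1, \lceil (n-1)/3 \rceil - 1)$, and substitution into $p(1-p)/(r(1-p) + sp)$ gives the claimed $\gamma_\mathscr{H}(p)$.

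For $p \in [1/2, 1]$, the argument is standard: $ed_\mathscr{H}(1/2) = \gamma_\mathscr{H}(1/2)$ holds for any hereditary property, $\gamma_\mathscr{H}(1) = 0 = ed_\mathscr{H}(1)$, and on $[1/2, 1]$ the function $\gamma_\mathscr{H}$ coincides with the linear $(1-p)/(\lceil n/2 \rceil - 1)$; combining with the concavity of $ed_\mathscr{H}$ (Proposition \ref{pro8}) and $ed_\mathscr{H} \leq \gamma_\mathscr{H}$ forces $ed_\mathscr{H}(p) = \gamma_\mathscr{H}(p)$ on this interval. So any counterexample $K$ must have $p < 1/2$.

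For $p \in (0, 1/2)$, Proposition \ref{pro1}(a) eliminates black edges and forces every white edge of $K$ to have both endpoints black; in particular every edge at a white vertex is gray. If $K$ has two white vertices, the gray edge between them makes $K(2, 0)$ a sub-CRG of $K$, and $P_n \mapsto K(2, 0) \subseteq K$ contradicts $K \in \mathscr{K}(\mathscr{H})$. So $K$ has at most one white vertex; suppose it has $\omega$ and $s$ black vertices. All $\omega b_i$ edges are then gray, so if $s \geq \lceil (n-1)/3 \rceil$, I embed $P_n$ into $K$ by sending $I$ to $\omega$ and each clique $C_j$ from the Step~1 partition to a distinct black vertex: the gray $\omega b_j$ edges absorb the mixed cross-edges between $I$ and $C_j$, while consecutive cliques of the partition are separated along $P_n$ by an $I$-vertex, so distinct $C_j$ and $C_{j'}$ have no cross-edges in $P_n$ and the colors of the $b_j b_{j'}$ edges are irrelevant. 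This contradiction forces $s \leq \lceil (n-1)/3 \rceil - 1$; the weight bound in Proposition \ref{pro2}(a) combined with $\sum_v \mathbf{x}(v) = 1$ then gives $g_K(p) \geq p(1-p)/(1 - p + sp) \geq p(1-p)/(1 + (\lceil (n-1)/3 \rceil - 2)p) \geq \gamma_\mathscr{H}(p)$, again contradicting $g_K(p) < \gamma_\mathscr{H}(p)$. Hence $K$ has no white vertices.

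The main obstacle I anticipate is the embedding step in the case $s \geq \lceil (n-1)/3 \rceil$. The key is that Proposition \ref{pro1}(a) forces every edge at $\omega$ to be gray (since white edges must sit between two black vertices), and this is exactly what makes the partition-based embedding succeed regardless of how the remaining black-black edges of $K$ are colored. Without this restriction, one could imagine a coloring of $K$'s $\omega$-edges that blocks the embedding, so the step really relies on the $p$-core structure of $K$ via Proposition \ref{pro1}(a); once that is in hand, the linear structure of $P_n$, together with the chosen partition into cliques of size at most $2$ separated by $I$-vertices, reduces the rest to a direct combinatorial check.
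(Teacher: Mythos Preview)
Your proof is correct and follows essentially the same template as the paper's: compute the extreme points of the clique spectrum, dispose of $p\geq 1/2$ by concavity, and for $p<1/2$ use Proposition~\ref{pro1}(a) plus the partition $S=\{i:3\mid i\}$ to bound the number of black vertices when a white vertex is present. Your argument is in fact a bit more thorough than the paper's: you supply the block-counting justification that $P_n\not\mapsto K(1,\lceil(n-1)/3\rceil-1)$ (which the paper merely asserts), and you rule out two white vertices via $P_n\mapsto K(2,0)$ rather than the paper's $P_n\mapsto K(2,1)$, which is a cleaner use of the bipartiteness of $P_n$.
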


\begin{proof}
The extreme points of the clique spectrum of $Forb(P_n)$ are $(1, \lceil\frac{n-1}{3}\rceil-1)$ and $(0, \lceil\frac{n}{2}\rceil-1)$, which establishes the value of $\gamma_\mathscr{H}(p)$.

Since $ed_\mathscr{H}(1/2)=\gamma_\mathscr{H}(1/2)$ for any hereditary property and $\gamma_\mathscr{H}(1)=0$, we may use continuity and concavity to conclude that $ed_\mathscr{H}(p)=\frac{1-p}{\lceil\frac{n}{2}\rceil-1}$ for $p\in [1/2, 1]$.

Now, let $p\in[0, 1/2)$ and $K$ be a $p$-core CRG such that $P_n\not\mapsto K$. If $K$ has only white vertices, then $K\approx K(1,0)$ and $g_K(p)=p>\gamma_{\mathscr{H}}(p)$.
If $K$ has both white and black vertices, then it has at most one white vertex since $P_n \mapsto K(2,1)$. Furthermore, it can have at most $\lceil\frac{n-1}{3}\rceil-1$ black vertices. To see this, denote the vertices of $P_n$ by $\{0,1,\ldots,n-1\}$ where $0\sim1\sim2\sim\cdots\sim n-1$. Let $S$ consist of the members of $\{0,1,\ldots,n-1\}$ that are divisible by 3. The graph $P_n-S$ has $\lceil\frac{n-1}{3}\rceil$ connected components, each of which are cliques of size 1 or 2. Thus, regardless of whether the edges are white or gray, there are at most $\lceil\frac{n-1}{3}\rceil-1$ black vertices in $K$ and $g_K(p)\geq\frac{p(1-p)}{1+(\lceil\frac{n-1}{3}\rceil-2)p}$, with equality if and only if $K\approx K(1, \lceil\frac{n-1}{3}\rceil-1)$.

Summarizing, if $p\in[0,1/2)$ and $g_K(p)=ed_\mathscr{H}(p)$, then $K$ is either $K(1, \lceil\frac{n-1}{3}\rceil-1)$, $K(0, \lceil\frac{n}{2}\rceil-1)$ or $K$ has all black vertices (and white or gray edges).

\end{proof}

When $n < 5$, $\gamma_{\mathscr{H}}(p)=\min\{p, 1-p\}$. This observation plus continuity and concavity give that $ed_{\mathscr{H}}(p)=\gamma_{\mathscr{H}}(p)$ for all $p\in[0,1]$. From now on, we assume $n\geq5$.

We only need to consider the $K\in \mathscr{K}(Forb(P_n))$ with all black vertices such that $g_K(p)< \gamma_{Forb(P_n)}(p)$. Now, we establish some characterizations of such a $p$-core CRG $K$.

\begin{prop}\label{pro13}
Let $p\in\left[\frac{1}{\left\lceil\frac{n-1}{3}\right\rceil}, \frac{1}{2}\right)$, and $K$ be a $p$-core CRG with all black vertices such that $g_K(p)<\gamma_{Forb(P_n)}(p)$. Then:

$(a)$ for every $v\in V(K)$, $deg_G(v)\geq\left\lceil\frac{n-1}{3}\right\rceil$, and

$(b)$ for every $v,w\in V(K)$, $deg_G(v,w)\geq 1$.
\end{prop}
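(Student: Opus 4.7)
The strategy is to mirror the proof of Proposition \ref{pro4} almost verbatim, exploiting the fact that Lemma \ref{theorem3} supplies the same functional bound $\gamma_{\mathscr{H}}(p)\leq \frac{p(1-p)}{1+(\lceil (n-1)/3\rceil-2)p}$ that drives the analysis in the $\widetilde{C_n}$ case. Since by hypothesis $K$ is a $p$-core CRG with only black vertices and $p<1/2$, Proposition \ref{pro2}(a) applies without change: for every $v\in V(K)$ we have $d_G(v)=\frac{p-g_K(p)}{p}+\frac{1-2p}{p}\mathbf{x}(v)$, and $\mathbf{x}(u)\leq g_K(p)/(1-p)$ for every $u\in V(K)$.

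For part (a), the plan is first to use $\mathbf{x}(v)\geq 0$ and $p<1/2$ to drop the second term and obtain $d_G(v)\geq (p-g_K(p))/p$. Dividing by the maximum possible weight $g_K(p)/(1-p)$ gives $deg_G(v)\geq (1-p)/g_K(p)-(1-p)/p$. Plugging in the strict inequality $g_K(p)<\frac{p(1-p)}{1+(\lceil (n-1)/3\rceil-2)p}$ and simplifying, this collapses to $deg_G(v)>\lceil (n-1)/3\rceil-1$, whence $deg_G(v)\geq \lceil (n-1)/3\rceil$ by integrality.

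For part (b), the plan is to combine inclusion-exclusion $d_G(v)+d_G(w)-d_G(v,w)\leq 1$ with the Proposition \ref{pro2}(a) identity for both $v$ and $w$; the coefficient $(1-2p)/p$ of the $\mathbf{x}(v)+\mathbf{x}(w)$ term is nonnegative, so it can be discarded to obtain $d_G(v,w)\geq (p-2g_K(p))/p$. Dividing by $g_K(p)/(1-p)$ and substituting the same strict bound yields $deg_G(v,w)>\lceil (n-1)/3\rceil-1/p$, and the hypothesis $p\geq 1/\lceil (n-1)/3\rceil$ then forces $deg_G(v,w)\geq 1$.

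There is essentially no obstacle: both calculations are structurally identical to those in Proposition \ref{pro4}, and the only place where the specific forbidden subgraph entered the argument was through the explicit form of $\gamma_{\mathscr{H}}(p)$, which has the same $\lceil (n-1)/3\rceil$ expression in both cases. The minor point worth checking is that the strict inequality $g_K(p)<\gamma_{Forb(P_n)}(p)$, rather than a non-strict one, is exactly what licenses the boundary case $p=1/\lceil (n-1)/3\rceil$ in (b).
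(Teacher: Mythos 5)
Your proposal is correct and follows essentially the same route as the paper: the paper's proof of this proposition is a verbatim transcription of the argument for Proposition \ref{pro4}, using Proposition \ref{pro2}(a) to lower-bound $d_G(v)$ and $d_G(v,w)$, dividing by the maximum weight $g_K(p)/(1-p)$, and substituting the strict bound $g_K(p)<\frac{p(1-p)}{1+(\lceil(n-1)/3\rceil-2)p}$. Your closing remark about the strict inequality being what handles the endpoint $p=1/\lceil(n-1)/3\rceil$ in part (b) is also accurate.
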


\begin{proof}
$(a)$ Let $v,w \in V(K)$. By using Proposition {\ref{pro2}}$(a)$,
\begin{align*}
deg_G(v)&\geq\left\lceil\frac{d_G(v)}{\max\{\mathbf{x}(w)\}}\right\rceil\geq \frac{\frac{p-g_K(p)}{p}+\frac{1-2p}{p}\mathbf{x}(v)}{\frac{g_K(p)}{1-p}}\\
&\geq \frac{(p-g_K(p))(1-p)}{pg_K(p)}=\frac{1-p}{g_K(p)}-\frac{1-p}{p}\\
&>\frac{(1-p)+(\lceil\frac{n-1}{3}\rceil-1)p}{p}-\frac{1-p}{p}\\
&=\left\lceil\frac{n-1}{3}\right\rceil-1.
\end{align*}

$(b)$ By the inclusion-exclusion principle, $d_G(v)+d_G(w)-d_G(v,w)\leq 1$, and by using Proposition \ref{pro2}$(a)$, we have
$d_G(v,w)\geq 2\frac{p-g_K(p)}{p}+\frac{1-2p}{p}(\mathbf{x}(v)+\mathbf{x}(w))-1 \geq\frac{p-g_K(p)}{p}\geq \frac{p-2g_K(p)}{p}$ and for all $u \in V(K)$, $\mathbf{x}(u)\leq g_K(p)/(1-p)$. Therefore,
\begin{align*}
deg_G(v,w)&\geq\left\lceil\frac{d_G(v,w)}{\max\{\mathbf{x}(u)\}}\right\rceil\geq \left\lceil\frac{\frac{p-2g_K(p)}{p}}{\frac{g_K(p)}{1-p}}\right\rceil=\frac{1-p}{g_K(p)}-\frac{2(1-p)}{p}\\
&>\frac{(1-p)+(\lceil\frac{n-1}{3}\rceil-1)p}{p}-\frac{2(1-p)}{p}\\
&=\left\lceil\frac{n-1}{3}\right\rceil-\frac{1}{p}.
\end{align*}
Since $p\geq \frac{1}{\lceil\frac{n-1}{3}\rceil}$, we have $deg_G(v,w)\geq 1$.

\end{proof}

\begin{prop}\label{pro12}
Let $p\in [0, 1/2)$ and $K$ be a $p$-core CRG such that $K$ has only black vertices and white and gray edges. If $P_n \not\mapsto K$ then $K$ has no gray path with length greater than $\lceil\frac{n}{2}\rceil-1$.
\end{prop}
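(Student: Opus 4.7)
The plan is to mirror the partition argument of Proposition \ref{pro10}: if $K$ contains a gray path long enough to support a block-partition of $P_n$, we can directly embed $P_n$ into $K$. Assume for contradiction that $K$ contains a gray path $u_0 u_1 \cdots u_\ell$ with $\ell \geq \lceil n/2 \rceil$. Label $V(P_n)=\{0,1,\ldots,n-1\}$ with $i\sim i+1$, and partition $V(P_n)$ into $\lceil n/2\rceil$ consecutive blocks: the pairs $\{2j,2j+1\}$ for $0\leq j\leq \lfloor n/2\rfloor-1$, plus the singleton $\{n-1\}$ when $n$ is odd. Define $\varphi\colon V(P_n)\to V(K)$ by sending the $j$th block to $u_j$. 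Since the gray path has $\ell+1\geq \lceil n/2\rceil+1$ vertices, this map is well defined.

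To verify that $\varphi$ is an embedding, note that $p<1/2$ and every vertex of $K$ is black, so Proposition \ref{pro1}(a) forces every edge of $K$ to be white or gray. An edge of $P_n$ inside a single block is sent to a single black vertex, which satisfies the edge condition for $VB(K)$. An edge of $P_n$ crossing from block $j$ to block $j+1$ is sent to the gray edge $u_j u_{j+1}$, again satisfying the edge condition. A non-edge of $P_n$ has its endpoints in non-consecutive blocks, so their images are distinct vertices of $K$ joined by a white or gray edge, which is exactly what the non-edge embedding condition requires. Hence $P_n\mapsto K$, contradicting $P_n\not\mapsto K$.

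The argument is essentially a direct analogue of Proposition \ref{pro10}, with a gray path in $K$ playing the role of the gray cycle and $P_n$ (which lacks the extra triangle chord) playing the role of $\widetilde{C_n}$, so no tools beyond Proposition \ref{pro1}(a) are needed. The only point requiring (trivial) checking is that $\lceil n/2\rceil$ blocks of size at most two exhaust the $n$ vertices of $P_n$ and that $\ell+1\geq \lceil n/2\rceil$ indeed leaves enough room on the gray path. Accordingly there is no real obstacle; the proof is a short embedding construction together with one appeal to the structure theorem for $p$-core CRGs with $p\leq 1/2$.
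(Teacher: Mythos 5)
Your proposal is correct and follows essentially the same route as the paper: partition $V(P_n)$ into at most $\lceil n/2\rceil$ blocks of one or two consecutive vertices, map consecutive blocks to consecutive vertices of the gray path, and invoke Proposition~\ref{pro1}(a) to handle all remaining pairs. One harmless slip: a non-edge of $P_n$ (e.g.\ vertices $0$ and $3$) can lie in \emph{consecutive} blocks, but your justification still covers this case since the image pair $u_ju_{j+1}$ is gray and hence lies in $EW(K)\cup EG(K)$.
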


\begin{proof}
Suppose $K$ has some gray path of length $l>\lceil\frac{n}{2}\rceil-1$. Partition the vertices of $P_n$ into $l$ parts so that each of parts is either a set of two consecutive vertices (an edge) or single vertex. Because of the structure of $P_n$ and the fact that $l>\lceil\frac{n}{2}\rceil-1$, it is always possible to do so. This partition witnesses an embedding of $P_n$ into $l$-path of $K$ because we can map consecutive parts to consecutive vertices on the $l$-path. Since non-consecutive parts do not have edges between them and Proposition \ref{pro1}$(a)$ gives that the edges of $K$ are either white or gray, this map is an embedding that demonstrates $P_n\mapsto K$, a contradiction.

\end{proof}

We consider the value of $ed_{Forb(P_n)}(p)$ from the perspective of the gray subgraphs of CRGs $K$. Let $F$ be a graph, $V(F)=V(K)$, $E(F)=EG(K)$ where $K\in \mathscr{K}(Forb(P_n))$ is a $p$-core CRG with all black vertices such that $g_K(p)<\gamma_{Forb(P_n)}(p)$. By Proposition \ref{pro13}, we can obtain $F$ is a connected graph.

Suppose a maximum-length path forms a cycle in the graph $F$. Then Proposition {\ref{pro5}} implies that $F$ must be a Hamiltonian. By Proposition {\ref{pro12}, $|V(K)|\leq\left\lceil\frac{n}{2}\right\rceil-1$ and $g_K(p)\geq\frac{1-p}{\lceil\frac{n}{2}\rceil-1}$, a contradiction, and so we may assume that no maximum-length path in $F$ forms a cycle.
By Proposition {\ref{pro12}}, $F$ has no path with length greater than $\lceil\frac{n}{2}\rceil-1$, so $F$ has no cycle with length greater than $\lceil\frac{n}{2}\rceil-1$. And, by Proposition \ref{pro13}, every vertex in $F$ has degree at least $\left\lceil\frac{n-1}{3}\right\rceil\geq2$ and every pair of vertices has at least one common neighbor.

Let $v_1 \ldots v_\ell$ be such a maximum-length path in $K$ such that the sum $\mathbf{x}(v_1) + \mathbf{x}(v_\ell)$ is the largest among all such paths. By Proposition {\ref{pro6}}, $v_1$ and $v_\ell$ have a unique common neighbor $v_c$ and $N(v_1)\subseteq\{v_2, \ldots, v_c\}$. Let $v_1$ have $d$ neighbors in $F$. Since $v_1$ cannot have neighbors outside of this path, the sum of the weights of the neighbors of $v_1$ satisfies $d_G(v_1)\leq \mathbf{x}(v_2)+\cdots+\mathbf{x}(v_c)$ in $K$. And if $v_i\in\{v_1, \ldots, v_{c-1}\}$ is a predecessor of a neighbor of $v_1$, then it is an endpoint of a path containing the same $\ell$ vertices, namely $v_iv_{i-1}\ldots v_1v_{i+1}v_{i+2}\ldots v_c\ldots v_\ell$. Hence all $d$ predecessors of gray neighbors of $v_1$ (including $v_1$ itself) have weight at most $\mathbf{x}(v_1)$. By Proposition {\ref{pro2}}, $\frac{p-g_K(p)}{p}+\frac{1-p}{p}\mathbf{x}(v_1)=\mathbf{x}(v_1)+d_G(v_1)
\leq \mathbf{x}(v_1)+\cdots+\mathbf{x}(v_c)
\leq d\mathbf{x}(v_1)+(c-d)\frac{g_K(p)}{1-p}$,
which implies
$$g_K(p)\left(\frac{c-d}{1-p}+\frac{1}{p}\right)\geq 1-\mathbf{x}(v_1)\left(d-\frac{1-p}{p}\right).$$

By Propositions {\ref{pro13}} and {\ref{pro12}}, we have $c\leq \lceil\frac{n}{2}\rceil-1$ and $d\geq\lceil\frac{n-1}{3}\rceil$. And, when $\lceil\frac{n-1}{3}\rceil^{-1}\leq p\leq \frac{1}{2}$, by Proposition {\ref{pro2}(a)}, we have $\mathbf{x}(v)\leq g_K(p)/(1-p)$, hence
$$g_K(p)\geq\frac{1-p}{c}\geq\frac{1-p}{\lceil\frac{n}{2}\rceil-1}\geq\gamma_{\mathscr{H}}(p),$$
a contradiction.

So we can get $ed_{\mathscr{H}}(p)=\gamma_{\mathscr{H}}(p)$ for $p \in\left[\frac{1}{\lceil\frac{n-1}{3}\rceil}, 1\right]$. The proof is thus complete.

\section*{Acknowledgements}
The authors would like to thank the two referees for their careful reading of the manuscript and for their constructive suggestions provided in the reports, in particular, for pointing out the error in the proof. Yongtang Shi was partially supported by National Natural Science Foundation of China and Natural Science Foundation of Tianjin (No.~17JCQNJC00300).

\end{document}